\documentclass[10pt]{amsart}

\usepackage{enumerate, amsmath, amsfonts, amssymb, amsthm, mathtools, thmtools, wasysym, graphics, graphicx, xcolor, frcursive,xparse,comment,bbm,soul}
\usepackage[all]{xy}
\usepackage[colorlinks=true, pdfstartview=FitV, linkcolor=blue, citecolor=blue, urlcolor=blue]{hyperref}

\usepackage{url, hypcap}
\hypersetup{colorlinks=true, citecolor=darkblue, linkcolor=darkblue}
\usepackage{mathrsfs}

\usepackage{tikz}
\usetikzlibrary{arrows,automata}
\usetikzlibrary{patterns,snakes}
\usetikzlibrary{shapes.geometric,calc}
\tikzstyle{arrow}=[thick, ->, >=stealth]
\tikzset{
	edge/.style={->,> = latex'}
}
\usetikzlibrary{calc,through,backgrounds,shapes,matrix}
\usepackage{graphicx}

\setstcolor{red}

\usepackage[letterpaper,margin=1.25in]{geometry}

\definecolor{darkblue}{rgb}{0.0,0,0.7} 
\definecolor{darkgreen}{rgb}{0, .6, 0} 
\definecolor{lightblue}{rgb}{0,135,147}

\definecolor{red}{rgb}{1,0,0}

\definecolor{darkred}{rgb}{0.7,0,0} 
\definecolor{lightgrey}{rgb}{0.7,0.7,0.7} 

\newtheorem{theorem}{Theorem}[section]
\newtheorem{proposition}[theorem]{Proposition}

\newtheorem{lemma}[theorem]{Lemma}

\theoremstyle{definition}

\newtheorem{example}[theorem]{Example}

\newtheorem{remark}[theorem]{Remark}

\numberwithin{equation}{section}

\definecolor{darkred}{rgb}{0.7,0,0} 
\newcommand{\defn}[1]{{\color{darkred}\emph{#1}}} 

\usepackage[colorinlistoftodos]{todonotes}

\def\edge{\relbar\joinrel\relbar}


\title
  {Holonomy theorem for finite semigroups}
  
\author[J.~Rhodes]{John Rhodes}
\address[J. Rhodes]{Department of Mathematics, University of California, Berkeley, CA 94720, U.S.A.}
\email{rhodes@math.berkeley.edu, blvdbastille@gmail.com}

\author[A.~Schilling]{Anne Schilling}
\address[A. Schilling]{Department of Mathematics, UC Davis, One Shields Ave., Davis, CA 95616-8633, U.S.A.}
\email{anne@math.ucdavis.edu}

\author[P.~V.~Silva]{Pedro V. Silva}
\address[P. V. Silva]{Centro de
Matem\'{a}tica, Faculdade de Ci\^{e}ncias, Universidade do
Porto, R. Campo Alegre 687, 4169-007 Porto, Portugal}
\email{pvsilva@fc.up.pt}

\date{\today}
\keywords{Holonomy theorem, Karnofsky--Rhodes expansion, Lyndon--Chiswell length function}
\subjclass[2010]{20M30, 20M05, 20M17}


\begin{document}

\begin{abstract}
We provide a simple proof of the Holonomy Theorem using a new Lyndon--Chiswell length function on the 
Karnofsky--Rhodes expansion of a semigroup. Unexpectedly, we have both a left and a right action on the Chiswell 
tree by elliptic maps.
\end{abstract}

\maketitle

\section{Introduction}

The following problem is at the heart of the global theory of finite semigroups. Let $S,T,U$ be finite semigroups. Consider a 
product of the form $T \star U$ (direct, semidirect, wreath, block, triple -- see \cite{RS.2009}) admitting a projection 
morphism on $U$. When does $S$ divide $T \star U$ (i.e. $S$ is a homomorphic image of some subsemigroup of $T \star U$)? 
The standard technique consists of proving that, for a given generating set $A$ of $S$, each $a \in A$ can be lifted to some 
$\overline{a} \in T \star U$. Examples include the $V \cup T$ proof of the Krohn--Rhodes Theorem (see \cite{RS.2009}) or the alternative 
proof by Zeiger \cite{Zei.1967,Til.1976a,Til.1976b}, called the Holonomy Theorem proof.

Denoting by $\overline{S}$ the subsemigroup of $T \star U$ generated by $\{ \overline{a} \mid a \in A\}$, 
these theorems prove that $S$ is a homomorphic image of $\overline{S}$.  But sometimes it takes decades to discover the exact 
nature of $\overline{S}$. Rhodes' first attempt for the case of the Holonomy theorem was the Rhodes expansion $(S,A)^R$ 
(see \cite{Til.1976a,Til.1976b}) on the way to proving the Fundamental Lemma of Complexity of finite semigroups
(surmorphisms which are one-to-one on subgroups preserve complexity, see \cite{RS.2009}). 

A better version is the right Karnofsky--Rhodes expansion $\mathsf{KR}_{\operatorname{right}}(S,A)$ used in this paper. If we 
denote by $T_n$ the semigroup of transformations of an $n$-set, then, for $n \geqslant 3$, $T_n$ is not a subsemigroup of 
any nontrivial product. So $T_n$ must be expanded by taking preimages to become a subsemigroup of a wreath product 
and is intuitively the ``smallest'' one. We note that $\mathsf{KR}_{\operatorname{left}}(S,A)$ (for left acting semigroups, 
note the right/left reversal) relates to {\em coupling from the past} in Markov chains \cite{ProppWilson.1996,RhodesSchilling.2019}.

In this paper, the Holonomy Theorem is proved via Lyndon--Chiswell theory of semigroups acting on trees (created by Rhodes in~\cite{Rhodes.1991} 
and developed by Rhodes and Silva in \cite{RS.2012}). We use that $\mathsf{KR}_{\operatorname{right}}(S,A)$ acts faithfully as elliptic maps 
on a tree, elliptic maps being an abstraction of wreath products. A key idea is that $\mathsf{KR}_{\operatorname{right}}(S,A)$ has 
many more useful surmorphisms, namely when $\mathsf{KR}_{\operatorname{right}}(S,A)$ is proved to consist of elliptic maps: 
restricting the elliptic maps to all vertices of bounded depth gives a surmorphism.

Another key idea is that the $\mathcal{J}$-order comes explicitly into the definition of the Lyndon--Chiswell function, and 
$\mathsf{KR}_{\operatorname{right}}(S,A)$ carries more information than the Rhodes expansion $(S,A)^R$. This extra 
information makes the proof in this paper much easier in comparison with \cite{Rhodes.1991, RS.2012}. Also we obtain an extra action 
we need to understand more fully.

Passing from elliptic maps to wreath products is not difficult: one just needs to number the edges leaving level $j$ with some index set $X_j$. 
The Lyndon--Chiswell construction does not yield uniform branching, but simply adds ``fake'' edges and leaves the function undefined on those. 
This leads to a wreath product of partial transformation semigroups.

Zeiger coding (see \cite{Zei.1967,Til.1976a,Til.1976b}) becomes the following. Suppose that $v_1,v_2$ are vertices of the tree at the 
same level such that $v_1r_{12} = v_2$ and $v_2r_{21} = v_1$, where $r_{12}$ and $r_{21}$ are elliptic maps representing 
elements of $\mathsf{KR}_{\operatorname{right}}(S,A)$. Then if real edges $E_1,\ldots,E_d$ descending from $v_1$ are labeled $1,\ldots,d$ by the index 
set, then the distinct $E_1r_{12},\ldots,E_dr_{12}$ edges get labeled $1,\ldots,d$ also by the index function.
As mentioned in the first two paragraphs, elliptic maps arise in studying the Zeiger proof of the Krohn--Rhodes Theorem and determining $\overline{S}$.

We plan to apply the results of this paper to Markov chains~\cite{RhodesSchilling.2019,ASST.2015,ASST.2015a}, where the tree is associated to a
statistic and strings running through the tree determine the stationary distribution and mixing time.

At the moment, it is very mysterious how the $\mathcal{J}$-order plays such a key role in both the semigroup theory and the Markov chain 
theory. In a future paper \cite{RSS}
we will compare the Chiswell--Lyndon trees of the Karnofsky--Rhodes and Rhodes expansions, leading to various
new statistics for the finite Markov chain.

This paper is organized as follows. In Section~\ref{section.KR}, we review the Karnofsky--Rhodes expansion of the
Cayley graph of a semigroup with a finite set of generators. In Section~\ref{section.CC}, we introduce the Dedekind
height function and our new Lyndon--Chiswell length function. The Lyndon--Chiswell length function is used in the 
Chiswell construction, which provides a rooted tree associated to the Karnofsky--Rhodes expansion of the semigroup.
The Chiswell construction in turn establishes the Holonomy Theorem (see Theorem~\ref{holt}). 
We conclude in Section~\ref{section.examples} with several examples.

\subsection*{Acknowledgments}
The authors are grateful to the anonymous referee for suggested corrections and improvements.

AS was partially supported by NSF grants DMS--1760329, DMS--1764153, and DMS--2053350. 
PVS was partially supported by CMUP, which is financed by national funds through FCT -- Funda\c c\~ao para a Ci\^encia e a Tecnologia, I.P., 
under the project with reference UIDB/00144/2020.

\section{The Karnofsky--Rhodes expansion}
\label{section.KR}

Let $A$ be a finite alphabet and let $A^+$ (respectively $A^*$) denote the free semigroup (respectively the free monoid) 
on $A$. If $\theta \colon A^+ \to S$ is a semigroup morphism onto a semigroup $S$, we say that $S$ is {\em generated by} 
$A$. We usually view $A$ as a subset of $S$. The reference to the morphism is omitted whenever possible and we use the 
notation $(S,A)$ to describe this situation.

We denote by $S^{\mathbbm{1}}$ the monoid obtained by adjoining to $S$ a (new) identity $\mathbbm{1}$ (even if 
$S$ is already a monoid). \defn{Green's quasi-orders} on $S$ are defined by
\begin{itemize}
\item
$a \leqslant_{{\mathcal{R}}} b \mbox{ if } a \in bS^{\mathbbm{1}}$,
\item
$a \leqslant_{{\mathcal{L}}} b \mbox{ if } a \in S^{\mathbbm{1}}b$,
\item
$a \leqslant_{{\mathcal{J}}} b \mbox{ if } a \in S^{\mathbbm{1}}bS^{\mathbbm{1}}$.
\end{itemize}
Then ${{\mathcal{X}}} = \, \leqslant_{{\mathcal{X}}} \cap \geqslant_{{\mathcal{X}}}$ for 
${\mathcal{X}} = {\mathcal{R}}, {\mathcal{L}}, {\mathcal{J}}$.

We denote by $S^{{\rm op}}$ the {\em opposite} semigroup of $S$, where the binary operation $\cdot$ on $S$ is replaced 
by the binary operation $x \circ y = y\cdot x$. Note that the $\mathcal{L}$ relation of $S^{{\rm op}}$ is the $\mathcal{R}$ 
relation of $S$, and the $\mathcal{R}$ relation of $S^{{\rm op}}$ is the $\mathcal{L}$ relation of $S$.

We define the \defn{left and right Cayley graphs} of $(S,A)$, denoted respectively by $\mathsf{LCay}(S,A)$ and 
$\mathsf{RCay}(S,A)$, as follows:
\begin{itemize}
\item
$S^{\mathbbm{1}}$ is the vertex set in both graphs,
\item
the edge set of $\mathsf{LCay}(S,A)$ is $\{ (s,a,as) \mid s \in S^{\mathbbm{1}},\, a \in A\}$,
\item
the edge set of $\mathsf{RCay}(S,A)$ is $\{ (s,a,sa) \mid s \in S^{\mathbbm{1}},\, a \in A\}$.
\end{itemize}
Note that these graphs are {\em complete} and {\em deterministic}: given a vertex $s$ and $u \in A^+$, there exists a 
unique path with label $u$ starting at $s$. The following remark, which follows from the definitions, will allow us to use 
left-right symmetries:

\begin{remark}
\label{lrop}
$\mathsf{LCay}(S,A) = \mathsf{RCay}(S^{{\rm op}},A)$.
\end{remark}

An edge $(p,q)$ of a directed graph is called a \defn{transition edge} if there exists no path from $q$ to $p$. This applies 
also to $A$-labeled graphs (in particular to left and right Cayley graphs), where $(s,a,s')$ is a transition edge if there is 
no path from $s'$ to $s$. Note that in a Cayley graph, edges of the form $(\mathbbm{1},a,a)$ are always transition edges.

If $s \xrightarrow{a} s'$ is an edge of $\mathsf{RCay}(S,A)$, then $s' = sa$ and so $s' \leqslant_{\mathcal{R}} s$. Hence 
this edge is a transition edge if and only if $s' <_{\mathcal{R}} s$. Note also that if two transition edges occur in two 
different paths, they must occur {\em in the same order}.

The \defn{right Karnofsky--Rhodes expansion} $\mathsf{KR}_{\operatorname{right}}(S,A)$ of $(S,A)$ is defined as the 
quotient $A^+/\tau_{r}$, where $\tau_{r}$ is the congruence on $A^+$ defined as follows: $u\,\tau_{r}\, v$ if $u = v$ holds 
in $S$ and the paths $\mathbbm{1} \xrightarrow{u} u$ and $\mathbbm{1} \xrightarrow{v} v$ in $\mathsf{RCay}(S,A)$ have 
the same transition edges. Then $S$ is a homomorphic image of $\mathsf{KR}_{\operatorname{right}}(S,A)$ in the 
obvious way.

The \defn{left Karnofsky--Rhodes expansion} of $(S,A)$ can be defined by
$$\mathsf{KR}_{\operatorname{left}}(S,A) = \mathsf{KR}_{\operatorname{right}}(S^{{\rm op}},A).$$

We will be paying particular attention to $\mathsf{KR}^{\mathbbm{1}}_{\operatorname{right}}(S,A)$, which is obtained 
by adjoining the (new) identity $\mathbbm{1}$ to $\mathsf{KR}_{\operatorname{right}}(S,A)$. We can view 
$\mathsf{KR}^{\mathbbm{1}}_{\operatorname{right}}(S,A)$ as the quotient $A^*/(\tau_r \cup \{ (1,1)\})$. 
Similarly, we define $\mathsf{KR}^{\mathbbm{1}}_{\operatorname{left}}(S,A)$.

\section{The Chiswell construction}
\label{section.CC}

From now on, $S$ is a fixed finite semigroup and $A$ is a generating set of $S$.

\subsection{The Dedekind height function}

We shall write $T = \mathsf{KR}_{\operatorname{right}}(S,A)$ throughout this section, and let 
$\varphi \colon T^{\mathbbm{1}} \to S^{\mathbbm{1}}$ denote the canonical surmorphism.

The \defn{Dedekind height function} $h:S^{\mathbbm{1}} \to \mathbb{N}$ is defined as
\[
	h(s) = \max\{ k \in \mathbb{N} \mid \mbox{there exists a chain $s_0 >_{\mathcal{J}} \cdots >_{\mathcal{J}} s_k = s$ in }
	S^{\mathbbm{1}}\}.
\]
This should be denoted $h_S$, but the semigroup $S$ is usually
understood, as in Proposition \ref{dhf} below.

Finite semigroups are known to be \defn{stable}: they satisfy the equalities
\[
	\leqslant_{\mathcal{R}} \cap\, {\mathcal{J}} = {\mathcal{R}}, \quad \leqslant_{\mathcal{L}} \cap\, {\mathcal{J}} 
	= {\mathcal{L}}.
\]
The following result will prove useful later.

\begin{lemma}
\label{stab}
If $s <_{\mathcal{K}} s'$ holds in $S$ for ${\mathcal{K}} \in \{ {\mathcal{R}}, {\mathcal{L}}, {\mathcal{J}}\}$, then $h(s) > h(s')$.
\end{lemma}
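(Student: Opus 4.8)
The plan is to reduce everything to the case $\mathcal{K} = \mathcal{J}$ and then argue directly from the definition of the Dedekind height function. First I would observe that by stability of finite semigroups, $s <_{\mathcal{R}} s'$ implies $s <_{\mathcal{J}} s'$: indeed $s \leqslant_{\mathcal{R}} s'$ gives $s \leqslant_{\mathcal{J}} s'$, and if we had $s \,\mathcal{J}\, s'$ then stability ($\leqslant_{\mathcal{R}} \cap\, \mathcal{J} = \mathcal{R}$) would force $s \,\mathcal{R}\, s'$, contradicting $s <_{\mathcal{R}} s'$. The same argument with $\leqslant_{\mathcal{L}} \cap\, \mathcal{J} = \mathcal{L}$ handles $s <_{\mathcal{L}} s'$. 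So in all three cases we may assume $s <_{\mathcal{J}} s'$.

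Next I would handle the $\mathcal{J}$ case. Take a maximal chain $s_0 >_{\mathcal{J}} \cdots >_{\mathcal{J}} s_k = s'$ witnessing $h(s') = k$. Since $s <_{\mathcal{J}} s'$, we may append $s$ to obtain $s_0 >_{\mathcal{J}} \cdots >_{\mathcal{J}} s_k = s' >_{\mathcal{J}} s$, a chain of length $k+1$ ending at $s$, so $h(s) \geqslant k+1 > k = h(s')$. One small subtlety: the Dedekind height function is defined on $S^{\mathbbm{1}}$, and the adjoined identity $\mathbbm{1}$ is $\mathcal{J}$-above everything, so if $s' = \mathbbm{1}$ there is nothing strictly $\mathcal{J}$-below-incomparable to worry about; but in fact $s <_{\mathcal{J}} s'$ is given, so $s'$ is genuinely above $s$ and the chain extension goes through regardless of whether $\mathbbm{1}$ is involved.

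I do not anticipate a serious obstacle here; the only point requiring a little care is making sure the reduction from $\mathcal{R}$ and $\mathcal{L}$ to $\mathcal{J}$ is stated cleanly using stability, and that the chain-extension step is legitimate in $S^{\mathbbm{1}}$ rather than $S$. In fact one could phrase the whole proof uniformly: for any $\mathcal{K} \in \{\mathcal{R}, \mathcal{L}, \mathcal{J}\}$, $s <_{\mathcal{K}} s'$ implies $s \leqslant_{\mathcal{J}} s'$, and stability rules out $s \,\mathcal{J}\, s'$, so $s <_{\mathcal{J}} s'$; then extend a maximal $\mathcal{J}$-chain through $s'$ by one step to $s$. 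This keeps the argument to a few lines.
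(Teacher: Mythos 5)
Your proof is correct and follows essentially the same route as the paper: reduce the $\mathcal{R}$ and $\mathcal{L}$ cases to the $\mathcal{J}$ case via stability, and then note that the $\mathcal{J}$ case is immediate from the definition of $h$ by extending a maximal chain through $s'$ by one step. The paper merely compresses the chain-extension step to the word ``immediate,'' which you have usefully spelled out.
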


\begin{proof}
The result is immediate for ${\mathcal{J}}$. By symmetry, we may assume that $s <_{\mathcal{R}} s'$. It follows that 
$s \leqslant_{\mathcal{J}} s'$. Now since $S$ is stable we cannot have $s {\mathcal{J}} s'$, thus
$s <_{\mathcal{J}} s'$ and so $h(s) > h(s')$.
\end{proof}

A semigroup $S$ is \defn{regular} if every $s\in S$ is regular. That is, for each $s\in S$ there exists an element $s' \in S$ such 
that $ss's=s$.

\begin{lemma}
\label{pullreg}
If $t,t' \in T^{\mathbbm{1}}$ satisfy $\varphi(tt't) = \varphi(t)$, then $tt't = t$.
\end{lemma}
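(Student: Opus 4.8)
The plan is to exploit the fact that the right Karnofsky--Rhodes expansion remembers the transition edges of paths in $\mathsf{RCay}(S,A)$, and to combine this with stability of $S$ via Lemma~\ref{stab}. Write $t = [u]$, $t' = [v]$ for $u \in A^*$, $v \in A^*$ (allowing the empty word when the relevant element is $\mathbbm{1}$), so that $tt't = [uvu]$ and the hypothesis says $\varphi([uvu]) = \varphi([u])$, i.e. $uvu = u$ holds in $S^{\mathbbm{1}}$. I must show $uvu \,\tau_r^{\mathbbm 1}\, u$, that is, the paths $\mathbbm 1 \xrightarrow{uvu} uvu$ and $\mathbbm 1 \xrightarrow{u} u$ in $\mathsf{RCay}(S^{\mathbbm 1},A)$ have the same transition edges. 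Since $uvu = u$ in $S$, the path labelled $uvu$ passes through the vertices $u$ (after reading the prefix $u$), then $uv = u v$, then returns to $uvu = u$. So the transition edges of the $uvu$-path are: the transition edges of the prefix path $\mathbbm 1 \xrightarrow{u} u$, together with any transition edges appearing in the loop $u \xrightarrow{vu} u$.

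The key step is therefore to show that the sub-path $u \xrightarrow{vu} u$ contains \emph{no} transition edges at all. Every vertex $w$ visited along this loop satisfies $w \leqslant_{\mathcal{R}} u$ (reading letters on the right only moves down the $\leqslant_{\mathcal R}$-order in $\mathsf{RCay}$), and the loop starts and ends at $u$, so every such $w$ also satisfies $u = w(\text{suffix}) $, giving $u \leqslant_{\mathcal R} w$; hence $w \,\mathcal R\, u$ for every vertex $w$ on the loop. Now an edge $w \xrightarrow{a} wa$ on this loop is a transition edge iff $wa <_{\mathcal R} w$; but $wa \,\mathcal R\, w$ as just shown, so it is not a transition edge. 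Thus the loop $u \xrightarrow{vu} u$ has empty set of transition edges, and consequently the $uvu$-path and the $u$-path have exactly the same transition edges. Combined with $uvu = u$ in $S$, this gives $uvu \,\tau_r\, u$ (adjusting for identities as needed via $\tau_r \cup \{(1,1)\}$), i.e. $tt't = t$ in $T^{\mathbbm 1}$.

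The main obstacle is the careful bookkeeping around the adjoined identity $\mathbbm 1$ and the empty word: if $t = \mathbbm 1$ the statement is trivial, and if $t' = \mathbbm 1$ then $tt't = t^3$ and one still needs the loop $u \xrightarrow{u} u = u^2$ (which collapses since $u^2 = u$ in $S$) to have no transition edges — this is handled by the same $\mathcal R$-class argument. One should also double-check the convention for which direction edges of $\mathsf{RCay}(S,A)$ point and confirm that "transition edge'' for the loop means exactly "$<_{\mathcal R}$ strictly'', which is precisely the observation recorded in the paragraph preceding the definition of $\mathsf{KR}_{\operatorname{right}}$. No use of Lemma~\ref{stab} is actually needed for this particular argument — the $\mathcal R$-class collapse on the loop is purely combinatorial in $\mathsf{RCay}$ — although stability is what guarantees the surrounding theory behaves well. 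A clean alternative phrasing: since $\varphi(tt't)=\varphi(t)$ forces the loop to live inside a single $\mathcal R$-class of $S$, and transition edges exit $\mathcal R$-classes, the expanded words cannot be separated by $\tau_r$.
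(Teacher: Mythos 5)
Your proposal is correct and follows essentially the same route as the paper: decompose the path for $uvu$, use determinism of $\mathsf{RCay}(S,A)$ to identify a loop labeled $vu$ at $\varphi(t)$, and observe that a loop contains no transition edges, so $uvu\,\tau_r\,u$. The only (immaterial) difference is that you justify the no-transition-edges-on-a-loop step via the $\mathcal{R}$-class collapse, whereas the paper invokes the definition of transition edge directly (every edge on a loop admits a return path).
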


\begin{proof}
Let $u,v \in A^*$ represent $t$ and $t'$, respectively. We have paths $\mathbbm{1} \xrightarrow{u} \varphi(t)$ and 
$\mathbbm{1} \xrightarrow{uvu} \varphi(tt't)$ in $\mathsf{RCay}(S,A)$. Since $\varphi(tt't) = \varphi(t)$ and 
$\mathsf{RCay}(S,A)$ is deterministic, we actually have a loop labeled by $vu$ at $\varphi(t)$. Since a loop cannot 
contain transition edges, it follows that $uvu \,\tau_r\, u$ and so $tt't = t$.
\end{proof}

It follows that if $S$ is regular, then $T^{\mathbbm{1}}$ is also regular.

\begin{lemma}
\label{krpj}
Assume that $S$ is a regular semigroup and let $t,t' \in T^{\mathbbm{1}}$. 
\begin{itemize}
\item[(i)]
If  $\varphi(t) \leqslant_{\mathcal{J}} \varphi(t')$ if and only if $t \leqslant_{\mathcal{J}} t'$.
\item[(ii)]
$\varphi(t) <_{\mathcal{J}} \varphi(t')$ if and only if $t <_{\mathcal{J}} t'$.
\end{itemize}
\end{lemma}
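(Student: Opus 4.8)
The plan is to prove part (i) first, then deduce part (ii) from it together with Lemma~\ref{pullreg}. For part (i), suppose $\varphi(t) \leqslant_{\mathcal{J}} \varphi(t')$, so that $\varphi(t) = x\,\varphi(t')\,y$ for some $x,y \in S^{\mathbbm{1}}$. Since $\varphi$ is surjective, lift $x$ and $y$ to elements $a,b \in T^{\mathbbm{1}}$, so $\varphi(a t' b) = \varphi(t)$. This is not yet $t \leqslant_{\mathcal{J}} t'$ — we only know the images agree, and $\varphi$ need not be injective. The key idea is to use regularity of $S$ to ``repair'' this: since $S$ is regular, $\varphi(t)$ has an inverse-like element, and I would find $c \in T^{\mathbbm{1}}$ with $\varphi(t c t) = \varphi(t)$ (lift an element $s'$ of $S$ with $\varphi(t) s' \varphi(t) = \varphi(t)$). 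Then consider the element $t' = t (c t) = t\,(c\,t)$ — more precisely, I want to write $t$ itself as something of the form $t \cdot (\text{stuff}) \cdot t' \cdot (\text{stuff})$. Set $w = c\,a\,t'\,b\,c$; then $\varphi(t\,w\,t) = \varphi(t)\,s'\,\varphi(t)\,s'\,\varphi(t) = \varphi(t)$ using $s'$ twice, wait — more directly: $\varphi(t(ca t' b c)t) = \varphi(t)\,x^{-1}$-ish; let me instead take $w$ so that $\varphi(twt) = \varphi(t)$ and then invoke Lemma~\ref{pullreg} to conclude $twt = t$, which exhibits $t \leqslant_{\mathcal{J}} t'$ provided $t'$ (or an associate) appears inside $twt$.

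Concretely, the cleanest route: from $\varphi(t) = x\varphi(t')y$ and $\varphi(t)s'\varphi(t) = \varphi(t)$, we get $\varphi(t) = \varphi(t)s'\varphi(t) = \varphi(t)s'x\varphi(t')y$, and iterating, $\varphi(t) = \varphi(t)s'x\varphi(t')y\,s'\,x\varphi(t')y = \ldots$. The point is to build an expression $u$ in $T^{\mathbbm{1}}$, containing $t'$ as a factor, with $\varphi(u) = \varphi(t)$ and $u = t\,(\cdots)$. Lift $s'$ to $c \in T^{\mathbbm{1}}$ and set $u = t\,c\,a\,t'\,b$. Then $\varphi(u) = \varphi(t)\,s'\,x\,\varphi(t')\,y = \varphi(t)\,s'\,\varphi(t) = \varphi(t)$. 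Now $u = t \cdot(c a t' b)$ shows $u \leqslant_{\mathcal{R}} t$, hence $tu't = ?$ — instead apply Lemma~\ref{pullreg} with the roles arranged so that $\varphi(t \cdot c a t' b \cdot ?) $; actually we need the form $tt''t$. So take $t'' = (c a t' b)\cdot c'$ where $c'$ is chosen with $\varphi(t(cat'b)c't) = \varphi(t)$: since $\varphi((cat'b)) = s'$ and $\varphi(tc't)$ can be made $\varphi(t)$ by choosing $c'$ to lift another inverse of $\varphi(t)$, we obtain $\varphi(t\,t''\,t) = \varphi(t)\,s'\,\varphi(t) = \varphi(t)$. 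Lemma~\ref{pullreg} then gives $t\,t''\,t = t$, and since $t'$ is a factor of $t''$, we conclude $t \leqslant_{\mathcal{J}} t'$.

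For part (ii): the forward direction $t <_{\mathcal{J}} t' \Rightarrow \varphi(t) <_{\mathcal{J}} \varphi(t')$ would be the easy half if $\varphi$ were injective, but it is not; instead, $t <_{\mathcal{J}} t'$ gives $\varphi(t) \leqslant_{\mathcal{J}} \varphi(t')$ immediately (apply $\varphi$ to $t = xt'y$), and if $\varphi(t)\,\mathcal{J}\,\varphi(t')$ held we could run part (i) with the roles of $t,t'$ swapped to get $t' \leqslant_{\mathcal{J}} t$, contradicting $t <_{\mathcal{J}} t'$; hence $\varphi(t) <_{\mathcal{J}} \varphi(t')$. Conversely, if $\varphi(t) <_{\mathcal{J}} \varphi(t')$ then $\varphi(t) \leqslant_{\mathcal{J}} \varphi(t')$ gives $t \leqslant_{\mathcal{J}} t'$ by part (i); and $t\,\mathcal{J}\,t'$ would force $\varphi(t)\,\mathcal{J}\,\varphi(t')$ upon applying $\varphi$, contradicting $\varphi(t) <_{\mathcal{J}} \varphi(t')$; hence $t <_{\mathcal{J}} t'$. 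The main obstacle is getting the bookkeeping in part (i) exactly right — choosing the lifts and the inverse elements so that the sandwiched expression $t\,t''\,t$ genuinely has $t'$ as a factor while mapping to $\varphi(t)$ under $\varphi$, so that Lemma~\ref{pullreg} applies cleanly; once that is set up, everything else is formal.
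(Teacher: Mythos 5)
Your construction is correct and is essentially the paper's own proof: the paper lifts the factorization $\varphi(t)=x\varphi(t')y$ to $p,q\in T^{\mathbbm{1}}$, uses regularity to pick $z$ with $\varphi(tzt)=\varphi(t)$, checks $\varphi(tzpt'qzt)=\varphi(t)$, and applies Lemma~\ref{pullreg} to conclude $t=tzpt'qzt$ and hence $t\leqslant_{\mathcal{J}}t'$ --- exactly your $t''=cat'bc$ --- and part~(ii) is deduced from~(i) in the same way. The only blemish is the intermediate claim $\varphi(cat'b)=s'$ (it is actually $s'\varphi(t)$), but your final identity $\varphi(t\,t''\,t)=\varphi(t)s'\varphi(t)s'\varphi(t)=\varphi(t)$ still holds, so nothing breaks.
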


\begin{proof}
(i) If $\varphi(t) \leqslant_{\mathcal{J}} \varphi(t')$, there exist $p,q \in T^{\mathbbm{1}}$ such that $\varphi(t) = \varphi(pt'q)$. On the other hand, 
since $S$ is regular, we have $\varphi(t) = \varphi(tzt)$ for some $z \in T$. Hence 
$$\varphi(t) = \varphi(tzt) = \varphi(tztzt) = \varphi(tzpt'qzt)$$
and it follows from Lemma \ref{pullreg} that $t = tzpt'qzt$. Therefore $t \leqslant_{\mathcal{J}} t'$.

The converse implication follows from $\varphi$ being a homomorphism.

(ii) Assume that $\varphi(t) <_{\mathcal{J}} \varphi(t')$. By (i), we obtain $t \leqslant_{\mathcal{J}} t'$. Since $\leqslant_{\mathcal{J}}$ is 
preserved by homomorphisms, 
$t\, \mathcal{J} \, t'$ implies $\varphi(t) \, \mathcal{J} \, \varphi(t')$, a contradiction. Thus $t <_{\mathcal{J}} t'$.

Conversely, assume that $t <_{\mathcal{J}} t'$. Hence $\varphi(t) \leqslant_{\mathcal{J}} \varphi(t')$. Since $\varphi(t) \, \mathcal{J} \, \varphi(t')$ implies 
$t\, \mathcal{J} \, t'$ by (i), we get $\varphi(t) <_{\mathcal{J}} \varphi(t')$.
\end{proof}

\begin{proposition}
\label{dhf}
Assume that $S$ is a regular semigroup and let $t \in T^{\mathbbm{1}}$. Then $h(t) = h(\varphi(t))$. 
\end{proposition}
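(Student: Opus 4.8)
The plan is to obtain $h(t) = h(\varphi(t))$ as a short consequence of Lemma~\ref{krpj}(ii), which is precisely the tool that converts strict $\mathcal{J}$-comparisons in $T^{\mathbbm{1}}$ into strict $\mathcal{J}$-comparisons in $S^{\mathbbm{1}}$ and back again; this is also the only place regularity of $S$ is used (inside Lemma~\ref{krpj}). I would prove the two inequalities $h(\varphi(t)) \geq h(t)$ and $h(t) \geq h(\varphi(t))$ separately, each by transporting a chain realizing one height to a chain realizing the other.

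For $h(\varphi(t)) \geq h(t)$: pick a strict chain $t = t_k <_{\mathcal{J}} t_{k-1} <_{\mathcal{J}} \cdots <_{\mathcal{J}} t_0$ in $T^{\mathbbm{1}}$ with $k = h(t)$, apply $\varphi$, and use Lemma~\ref{krpj}(ii) on each consecutive pair to conclude that $\varphi(t) = \varphi(t_k) <_{\mathcal{J}} \varphi(t_{k-1}) <_{\mathcal{J}} \cdots <_{\mathcal{J}} \varphi(t_0)$ is again a strict $\mathcal{J}$-chain, of the same length $k$; hence $h(\varphi(t)) \geq k$. For the reverse inequality $h(t) \geq h(\varphi(t))$: start from a strict chain $\varphi(t) = s_k <_{\mathcal{J}} s_{k-1} <_{\mathcal{J}} \cdots <_{\mathcal{J}} s_0$ in $S^{\mathbbm{1}}$ with $k = h(\varphi(t))$, and lift it. Since $\varphi$ is surjective, choose preimages $t_i \in T^{\mathbbm{1}}$ of $s_i$ for $0 \leq i \leq k-1$ and put $t_k = t$; then $\varphi(t_{i+1}) = s_{i+1} <_{\mathcal{J}} s_i = \varphi(t_i)$, so Lemma~\ref{krpj}(ii) yields $t_{i+1} <_{\mathcal{J}} t_i$, giving a strict $\mathcal{J}$-chain of length $k$ in $T^{\mathbbm{1}}$ that ends at $t$, whence $h(t) \geq k$. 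Combining the two inequalities finishes the proof.

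I do not expect a genuine obstacle here — the proposition is essentially a corollary of Lemma~\ref{krpj} — so the only thing to be careful about is bookkeeping: that $\varphi(\mathbbm{1}) = \mathbbm{1}$, so the adjoined identities match up and only the maximal end of a chain can be $\mathbbm{1}$; that transitivity of $<_{\mathcal{J}}$ guarantees the lifted sequence $t_0, \dots, t_k$ really is a strict chain (pairwise distinct, strictly descending) rather than merely a list of consecutive comparisons; and that the maxima defining $h$ are attained, which holds because $\leqslant_{\mathcal{J}}$ has a well-founded strict part on these finite semigroups. If one wants to be thorough about the last point it is worth noting explicitly that $T = \mathsf{KR}_{\operatorname{right}}(S,A)$ is finite, since a $\tau_r$-class is determined by an element of $S$ together with a subset of the (finite) edge set of $\mathsf{RCay}(S,A)$, so that $h$ is well defined on $T^{\mathbbm{1}}$ to begin with.
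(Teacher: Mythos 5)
Your argument is correct and is essentially the paper's own proof: both rest entirely on Lemma~\ref{krpj}(ii) to transport strict $\mathcal{J}$-chains between $T^{\mathbbm{1}}$ and $S^{\mathbbm{1}}$ (using surjectivity of $\varphi$ to lift chains in the reverse direction), and your version merely spells out the two inequalities and the bookkeeping that the paper compresses into a single ``if and only if'' statement about chains.
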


\begin{proof}
By Lemma \ref{krpj}(ii), we have a chain
\[
	t_1 >_{\mathcal{J}} \cdots >_{\mathcal{J}} t_k = t
\]
in $T$ if and only if we have a chain
\[
	\varphi(t_1) >_{\mathcal{J}} \cdots >_{\mathcal{J}} \varphi(t_k) = \varphi(t)
\]
in $S$. Thus $h(t) = h(\varphi(t))$.
\end{proof}

\begin{remark}
If $(S,A)$ is not regular, computing $h(t)$ for $t\in T$ can be more challenging sometimes.
\end{remark}

\subsection{The Lyndon--Chiswell length function}

Write
\[
	\ell = 2 \max\{ h(s) \mid s \in S^{\mathbbm{1}} 
	\}.
\]
Denote by $t(E)$ the endpoint of an edge $E$ of a directed graph.

Let $\alpha,\beta \in T^{\mathbbm{1}}$. 
Let $(E_1,\ldots,E_m)$ and $(E'_1,\ldots,E'_n)$ be the corresponding sequences of transition edges.  Since any edge 
starting at $\mathbbm{1}$ is a transition edge, we have $m = 0$ if and only if $\alpha = \mathbbm{1}$.
Let 
$$\xi(\alpha,\beta) = 
\max\{ i \in \{ 0,\ldots,m\} \mid E_1 = E'_1,\ldots, E_i = E'_i \}.$$
Hence $\xi(\alpha,\beta)$ counts the maximum number of transition edges consecutively shared by $\alpha$ and $\beta$, 
when we start with the first and proceed in order. If $\xi(\alpha,\beta) = k > 0$, we write also $\eta(\alpha,\beta) = E_k = E'_k$.

\begin{lemma}
\label{trin}
For all $\alpha,\beta,\gamma \in T^{\mathbbm{1}}$, we have:
\begin{itemize}
\item[(i)]
$\xi(\alpha,\beta) = \xi(\beta,\alpha)$;
\item[(ii)]
$\xi(\alpha\gamma,\beta\gamma) \geqslant \xi(\alpha,\beta)$;
\item[(iii)]
$\xi(\alpha,\gamma) \geqslant \min(\xi(\alpha,\beta),\xi(\beta,\gamma))$.
\end{itemize}
\end{lemma}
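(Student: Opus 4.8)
The plan is to reduce all three assertions to a single combinatorial statement about the sequences of transition edges. First I would make explicit why $\xi$ is well defined: for $\alpha \in T^{\mathbbm{1}}$, if $u \in A^*$ is any word representing $\alpha$, the sequence of transition edges met along the path $\mathbbm{1} \xrightarrow{u} \varphi(\alpha)$ of $\mathsf{RCay}(S,A)$, listed in order, depends only on $\alpha$; this is precisely the defining property of $\tau_r$ together with the fact (recorded above) that transition edges occurring along two different paths occur in the same order. Write $\mathbf{E}(\alpha) = (E_1,\ldots,E_{m_\alpha})$ for this sequence, so that $m_\alpha = 0$ if and only if $\alpha = \mathbbm{1}$, and note that by definition $\xi(\alpha,\beta)$ is the length of the longest common prefix of $\mathbf{E}(\alpha)$ and $\mathbf{E}(\beta)$.

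Granting this reformulation, part (i) is immediate, since ``longest common prefix'' is a symmetric notion. Part (iii) is the usual ultrametric inequality for longest common prefixes: setting $k = \min(\xi(\alpha,\beta),\xi(\beta,\gamma))$, for every $i \leqslant k$ the $i$-th entry of $\mathbf{E}(\alpha)$ equals that of $\mathbf{E}(\beta)$ (because $i \leqslant \xi(\alpha,\beta)$) and the $i$-th entry of $\mathbf{E}(\beta)$ equals that of $\mathbf{E}(\gamma)$ (because $i \leqslant \xi(\beta,\gamma)$), hence the $i$-th entries of $\mathbf{E}(\alpha)$ and $\mathbf{E}(\gamma)$ agree; all entries in question are defined since $k \leqslant \xi(\alpha,\beta) \leqslant m_\alpha$ and $k \leqslant \xi(\beta,\gamma) = \xi(\gamma,\beta) \leqslant m_\gamma$ by (i). Therefore $\xi(\alpha,\gamma) \geqslant k$.

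The only part with real content is (ii), and I would base it on the claim that $\mathbf{E}(\alpha)$ is a prefix of $\mathbf{E}(\alpha\gamma)$ for all $\alpha,\gamma \in T^{\mathbbm{1}}$. To prove it, pick words $u$ and $w$ representing $\alpha$ and $\gamma$; then $uw$ represents $\alpha\gamma$, and the path $\mathbbm{1} \xrightarrow{uw} \varphi(\alpha\gamma)$ in $\mathsf{RCay}(S,A)$ is the concatenation of $\mathbbm{1} \xrightarrow{u} \varphi(\alpha)$ and $\varphi(\alpha) \xrightarrow{w} \varphi(\alpha\gamma)$. Since being a transition edge is an intrinsic property of an edge of the graph --- it asserts the absence of a return path and makes no reference to any traversing path --- the transition edges along $\mathbbm{1} \xrightarrow{uw} \varphi(\alpha\gamma)$, listed in order, are exactly those along $\mathbbm{1} \xrightarrow{u} \varphi(\alpha)$, namely $\mathbf{E}(\alpha)$, followed by those along $\varphi(\alpha) \xrightarrow{w} \varphi(\alpha\gamma)$; so $\mathbf{E}(\alpha)$ is an initial segment of $\mathbf{E}(\alpha\gamma)$ (the cases $\alpha = \mathbbm{1}$ or $\gamma = \mathbbm{1}$ being covered by taking the corresponding word empty). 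Applying the claim to $\alpha\gamma$ and to $\beta\gamma$, the sequences $\mathbf{E}(\alpha\gamma)$ and $\mathbf{E}(\beta\gamma)$ begin with $\mathbf{E}(\alpha)$ and $\mathbf{E}(\beta)$ respectively, and these agree on their first $\xi(\alpha,\beta)$ entries; hence so do $\mathbf{E}(\alpha\gamma)$ and $\mathbf{E}(\beta\gamma)$, which gives $\xi(\alpha\gamma,\beta\gamma) \geqslant \xi(\alpha,\beta)$.

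I do not anticipate a genuine obstacle. The one point demanding care is to phrase the prefix claim in (ii) at the level of the ambient Cayley graph, where ``transition edge'' is a property of an edge and not of a path, rather than re-deriving it path by path; after that, (i) and (iii) are pure bookkeeping on finite sequences, and the well-definedness reformulation is just the definition of the Karnofsky--Rhodes congruence.
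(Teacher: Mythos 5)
Your proof is correct and follows essentially the same route as the paper's: the paper disposes of (i) and (iii) by the symmetry and transitivity of equality, and of (ii) by exactly your key claim that the sequence of transition edges of $\alpha\gamma$ begins with that of $\alpha$. Your write-up merely makes explicit the well-definedness of the edge sequences and the concatenation argument that the paper leaves as a one-line remark.
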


\begin{proof}
(i) follows from the symmetry of equality.

(ii) follows from the following fact: the sequence of transition edges of $\alpha\gamma$ starts with the sequence of 
transition edges of $\alpha$.

For (iii), we may assume that $\min(\xi(\alpha,\beta),\xi(\beta,\gamma)) = k > 0$. Let $(E_1,\ldots,E_m)$, $(E'_1,\ldots,E'_n)$ and $(E''_1,\ldots,E''_p)$ be the sequences of transition edges corresponding to $\alpha$, $\beta$ and $\gamma$. Then $E_1 = E'_1,\ldots, E_k = E'_k$  and also $E'_1 = E''_1,\ldots, E'_k = E''_k$.  Hence $E_1 = E''_1,\ldots, E_k = E''_k$  and so $\xi(\alpha,\gamma) \geqslant k$ as required.
\end{proof}

We prove also the following result:

\begin{lemma}
\label{leta}
Let $\alpha,\beta,\gamma \in T^{\mathbbm{1}}$ be such that $\xi(\alpha,\beta) > 0$. Then:
\begin{itemize}
\item[(i)]
$\xi(\gamma\alpha,\gamma\beta) > 0$;
\item[(ii)]
$t(\eta(\gamma\alpha,\gamma\beta)) \leqslant_{{\mathcal{R}}} \varphi(\gamma) t(\eta(\alpha,\beta)) \leqslant_{{\mathcal{L}}} t(\eta(\alpha,\beta))$.
\end{itemize}
\end{lemma}

\begin{proof}
We may assume that $\gamma \neq \mathbbm{1}$. 

(i) Since the first letter of a word representing $\gamma$ must necessarily label a transition edge of $\gamma$ (or $\gamma\alpha$, or $\gamma\beta$), it follows that $\xi(\gamma\alpha,\gamma\beta) > 0$.

(ii) Let $(E_1,\ldots,E_m)$, $(E'_1,\ldots,E'_n)$ and $(E''_1,\ldots,E''_p)$ be the sequences of transition edges corresponding to $\alpha$, $\beta$ and $\gamma$, respectively. Let $\xi(\alpha,\beta) = k$, so that $\eta(\alpha,\beta) = E_k = E'_k$. What are the possible transition edges of $\gamma\alpha$? Clearly, $E''_1,\ldots,E''_p$ are all transition edges of $\gamma\alpha$.

Write $S^{\mathbbm{1}} = A^*/\sigma$ and $T^{\mathbbm{1}} = A^*/\tau$. Let $u = e_1u_1\ldots e_mu_m$ and $u' = e'_1u'_1\ldots e'_nu'_n$ be words representing $\alpha$ and $\beta$ respectively, 
where $e_i$ and $e'_j$ denote the labels of $E_i$ and $E'_j$. Write $v_i = e_1u_1\ldots e_iu_i$ and 
$v'_j = e'_1u'_1\ldots e'_ju'_j$ for all $0 \leqslant i \leqslant m$ and $0 \leqslant j \leqslant n$. Let $\gamma = w\tau$. 

Since the letters occurring in the $u_i$ label no transition edges in the path
$\mathbbm{1} \xrightarrow{u} u\sigma$ in $\mathsf{RCay}(S,A)$, there exists a path $v_i\sigma \xrightarrow{x_i} (v_{i-1}e_i)\sigma$ in $\mathsf{RCay}(S,A)$ for $i = 1,\ldots,m$. Hence $(v_ix_i)\tau = (v_{i-1}e_i)\tau$ and so $(wv_ix_i)\tau = (wv_{i-1}e_i)\tau$ for $i = 1,\ldots,m$. Thus the only possible transition edges of $\gamma\alpha$ beyond $E''_1,\ldots,E''_p$ are of the form $(wv_{i-1})\sigma \xrightarrow{e_i} (wv_{i-1}e_i)\sigma$ for some $i \in \{1,\ldots,m\}$. Similarly, the only possible transition edges of $\gamma\beta$ beyond $E''_1,\ldots,E''_p$ are of the form $(wv'_{i-1})\sigma \xrightarrow{e'_i} (wv'_{i-1}e'_i)\sigma$ for some $i \in \{1,\ldots,n\}$.

Let 
$$I = \{ i \in \{ 1,\ldots,k\} \mid (wv_{i-1})\sigma \xrightarrow{e_i} (wv_{i-1}e_i)\sigma \mbox{ is a transition edge of  }\mathsf{RCay}(S,A)\}.$$
Note that $I$ needs not to contain all the integers between 1 and $r$.
Suppose first that $I \neq \emptyset$ and let $r = \max I$. Since 
$$(v_{i-1}\sigma,e_i,(v_{i-1}e_i)\sigma) = E_i = E'_i = (v'_{i-1}\sigma,e'_i,(v'_{i-1}e'_i)\sigma)$$
for $i = 1,\ldots,k$, we get
$((wv_{i-1})\sigma,e_i,(wv_{i-1}e_i)\sigma) = ((wv'_{i-1})\sigma,e'_i,(wv'_{i-1}e'_i)\sigma)$
as well. Hence $\gamma\alpha$ and $\gamma\beta$ share the same transition edges up to 
$(wv_{r-1}\sigma,e_r,(wv_{r-1}e_r)\sigma) = (wv'_{r-1}\sigma,e'_r,(wv'_{r-1}e'_r)\sigma)$ at least. Now since $r = \max I$ there are no more transition edges between $(wv_{r-1}e_r)\sigma$ and $(wv_{k-1}e_k)\sigma$ in $\mathsf{RCay}(S,A)$. Hence
$$t(\eta(\gamma\alpha,\gamma\beta)) \leqslant_{{\mathcal{R}}} (wv_{r-1}e_r)\sigma \, {\mathcal{R}}\, (wv_{k-1}e_k)\sigma =
\varphi(\gamma) t(\eta(\alpha,\beta)).$$

We reach the same conclusion in the case $I = \emptyset$, replacing the edge $E_r$ in the above argument by $E''_p$ (note that $p \geq 1$ since we are assuming $\gamma \neq \mathbbm{1}$).

Finally, $\gamma t(\eta(\alpha,\beta)) \leqslant_{{\mathcal{L}}} t(\eta(\alpha,\beta))$ holds trivially.
\end{proof}

Let $(E_1,\ldots,E_m)$ and $(E'_1,\ldots,E'_n)$ be the sequences of transition edges corresponding to $\alpha$ and $\beta$, respectively. Let $k = \xi(\alpha,\beta)$. If $k > 0$, we have $\eta(\alpha,\beta) = E_k = E'_k$. We 
define the \defn{Lyndon--Chiswell length function} 
$D \colon T^{\mathbbm{1}} \times T^{\mathbbm{1}} \to \mathbb{N}$ by
\[
	D(\alpha,\beta) = \begin{cases}
	\ell& \text{if $\alpha = \beta$,} \\
	0 & \text{if $\alpha \neq \beta$ and $k = 0$,} \\
	2h(t(E_{k})) & \text{if $0 < k < m,n$ 
	and $t(E_{k+1}) = t(E'_{k+1})$,} \\
	2h(t(E_{k}))-1 & \text{in all remaining cases.}	
	\end{cases}
\]
Note that ${\rm im}(h) = \{ 0,1,\ldots, \frac{\ell}{2} \}$ implies ${\rm im}(D) \subseteq \{0,1,\ldots, \ell\}$.
We show now that
\begin{equation}
\label{oop}
(\alpha \neq \beta \; \wedge \; \xi(\alpha,\beta) < \xi(\alpha,\gamma))\; \Rightarrow \; D(\alpha,\beta) < D(\alpha,\gamma)
\end{equation}
holds for all $\alpha,\beta,\gamma \in T^{\mathbbm{1}}$.

Assume that $\xi(\alpha,\beta) < \xi(\alpha,\gamma)$. We may assume that $\xi(\alpha,\beta) > 0$, otherwise 
$D(\alpha,\beta) = 0$. Then $t(\eta(\alpha,\beta)) >_{\mathcal{R}} t(\eta(\alpha,\gamma))$ 
because there exists in $\mathsf{RCay}(S,A)$ a path from 
$t(\eta(\alpha,\beta))$ to $t(\eta(\alpha,\gamma))$ containing transition edges. By Lemma \ref{stab}, 
we get $h(t(\eta(\alpha,\beta))) < h(t(\eta(\alpha,\gamma)))$, yielding 
$D(\alpha,\beta) < D(\alpha,\gamma)$. Thus~\eqref{oop} holds.

The following properties go a little beyond those of \cite[Fact 1.9]{Rhodes.1991}.
We provide a full proof.

\begin{lemma}
\label{lemma.length property}
The Lyndon-Chiswell length function satisfies the following properties for all $\alpha,\beta, \gamma \in T^{\mathbbm{1}}$:
\begin{itemize}
\item[(i)]
$D(\alpha,\beta)=D(\beta,\alpha)$;
\item[(ii)]
$D(\alpha\gamma, \beta\gamma) \geqslant D(\alpha,\beta)$;
\item[(iii)]
$D(\gamma\alpha, \gamma\beta) \geqslant D(\alpha,\beta)$;
\item[(iv)]
(\textbf{isoperimetric inequality}) $D(\alpha,\gamma) \geqslant \min(D(\alpha,\beta),D(\beta,\gamma))$.
\end{itemize}
\end{lemma}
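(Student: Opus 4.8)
The plan is to prove the four properties by leveraging the corresponding properties of $\xi$ from Lemma~\ref{trin} together with the implication~\eqref{oop} and a careful case analysis of the definition of $D$. Properties (i) and (iv) should come almost for free: symmetry (i) is immediate from $\xi(\alpha,\beta)=\xi(\beta,\alpha)$ and the fact that the three-way case distinction in the definition of $D$ is symmetric in $\alpha$ and $\beta$ (note that the condition ``$E_{k+1},E'_{k+1}$ both exist and $\mathrm{end}(E_{k+1})=\mathrm{end}(E'_{k+1})$'' is symmetric). For the isoperimetric inequality (iv), I would set $k_{\alpha\beta}=\xi(\alpha,\beta)$, etc., and use Lemma~\ref{trin}(iii): $\xi(\alpha,\gamma)\ge\min(\xi(\alpha,\beta),\xi(\beta,\gamma))$.

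For (iv) the idea is: if $\alpha=\gamma$ then $D(\alpha,\gamma)=\ell$ is maximal and we are done; otherwise, pick the pair among $\{(\alpha,\beta),(\beta,\gamma)\}$ achieving the minimum of $D$, say $(\alpha,\beta)$. If $\xi(\alpha,\beta)<\xi(\alpha,\gamma)$, then~\eqref{oop} gives $D(\alpha,\gamma)>D(\alpha,\beta)$ (handling the degenerate case $\alpha=\beta$ separately, where $D(\alpha,\beta)=\ell$ forces $\beta=\gamma$ too and then $D(\alpha,\gamma)=\ell$). If instead $\xi(\alpha,\beta)=\xi(\alpha,\gamma)$, then $\alpha$ and $\gamma$ share the same first $k=\xi(\alpha,\beta)$ transition edges and I need to compare the ``$\pm1$ refinement''; here I would argue that $\mathrm{end}(E_{k+1})=\mathrm{end}(E'_{k+1})$ fails for the pair $(\alpha,\beta)$ only if the $(k{+}1)$st transition edges genuinely diverge, and then transitivity of equality of endpoints shows the refinement for $(\alpha,\gamma)$ is at least as large — so $D(\alpha,\gamma)\ge D(\alpha,\beta)$. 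One must be slightly careful using $\xi(\beta,\gamma)$ when the min is achieved there; the argument is symmetric.

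Properties (ii) and (iii) are the substantive ones. For (ii), Lemma~\ref{trin}(ii) gives $\xi(\alpha\gamma,\beta\gamma)\ge\xi(\alpha,\beta)$, and in fact the sequence of transition edges of $\alpha\gamma$ literally begins with that of $\alpha$ (same for $\beta\gamma$). So if $k=\xi(\alpha,\beta)>0$ the first $k$ shared transition edges $E_1,\dots,E_k$ are the same for the pair $(\alpha\gamma,\beta\gamma)$, hence $\mathrm{end}(E_k)$ is unchanged and $D(\alpha\gamma,\beta\gamma)\ge 2h(\mathrm{end}(E_k))-1 \ge \min\{\text{values }D(\alpha,\beta)\text{ can take}\}$; more precisely if $D(\alpha,\beta)=2h(\mathrm{end}(E_k))$ I must check that the refinement condition still holds downstairs, which it does because whether $E_{k+1},E'_{k+1}$ exist and agree is determined by $\alpha,\beta$ alone when $\xi(\alpha\gamma,\beta\gamma)=k$; and when $\xi(\alpha\gamma,\beta\gamma)>k$, then~\eqref{oop} applied with $(\alpha\gamma,\beta\gamma,\cdot)$ — or a direct monotonicity observation via Lemma~\ref{stab} — gives a strictly larger value. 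The edge cases $\alpha=\beta$ (then $\alpha\gamma=\beta\gamma$, $D=\ell$) and $k=0$ (then $D(\alpha,\beta)=0$) are trivial.

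Property (iii) is where I expect the main obstacle, since there is no left-cancellation analogue of Lemma~\ref{trin}(ii): multiplying on the left by $\gamma$ can create, destroy, or reorder transition edges at the front. Here I would invoke the left–right symmetry of the construction — Remark~\ref{lrop} and the fact that $\mathsf{KR}_{\mathrm{left}}(S,A)=\mathsf{KR}_{\mathrm{right}}(S^{\mathrm{op}},A)$ — but that is not quite a free lunch because $D$ is defined using $\mathsf{RCay}$ and the Dedekind height, which are symmetric in $\mathcal{R}/\mathcal{L}$ at the level of $S$ but the length function as written privileges the right Cayley graph. The cleanest route, I think, is to observe that the Dedekind height $h$ depends only on the $\mathcal{J}$-order (Lemma~\ref{stab} is used via $\mathcal{J}$), so $h_{S^{\mathrm{op}}}=h_S$, and that $\ell$ and the combinatorics of transition edges transport correctly under $S\mapsto S^{\mathrm{op}}$; then (iii) for $D$ on $T=\mathsf{KR}_{\mathrm{right}}(S,A)$ becomes (ii) for the analogously-defined $D^{\mathrm{op}}$ on $\mathsf{KR}_{\mathrm{right}}(S^{\mathrm{op}},A)$, which we have already proved. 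If that transport is not as clean as hoped, the fallback is a direct argument: writing $\alpha=\gamma'u$-style paths, one shows that any two consecutive transition edges shared by $\gamma\alpha$ and $\gamma\beta$ past the portion contributed by $\gamma$ correspond to transition edges shared by $\alpha$ and $\beta$, again using that loops contain no transition edges (as in Lemma~\ref{pullreg}) to control what the prefix $\gamma$ does. I would present the symmetry argument as the main line and remark that it rests on $h_S=h_{S^{\mathrm{op}}}$ and the $S^{\mathrm{op}}$-definition of the left expansion.
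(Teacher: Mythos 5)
Your treatments of (i), (ii) and (iv) are essentially the paper's: (i) from the symmetry of $\xi$ and of the case distinction, (ii) from the fact that the transition-edge sequence of $\alpha\gamma$ literally begins with that of $\alpha$, and (iv) by reducing via \eqref{oop} to the case $\xi(\alpha,\beta)=\xi(\alpha,\gamma)\leqslant\xi(\beta,\gamma)$ and then chasing the endpoint-matching refinement by transitivity. Those parts are sound.

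The gap is in (iii), precisely where you anticipated trouble, and your main line does not survive it. Passing to $S^{\mathrm{op}}$ turns left multiplication into right multiplication, but it also replaces every other ingredient of the statement: $\mathsf{KR}_{\operatorname{right}}(S^{\mathrm{op}},A)=\mathsf{KR}_{\operatorname{left}}(S,A)$ is \emph{not} $(\mathsf{KR}_{\operatorname{right}}(S,A))^{\mathrm{op}}$ --- the two expansions are quotients of $A^+$ by genuinely different congruences, since one records transition edges in $\mathsf{RCay}(S,A)$ and the other in $\mathsf{LCay}(S,A)$ --- and the length function you obtain for $S^{\mathrm{op}}$ is, as the paper's final remark points out, the one built from the \emph{left} Cayley graph. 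So applying (ii) to $S^{\mathrm{op}}$ proves a right-multiplication inequality for a different function $D^{\mathrm{op}}$ on a different semigroup; it says nothing about the function $D$ on $T=\mathsf{KR}_{\operatorname{right}}(S,A)$ that appears in (iii). (This is why the paper calls the existence of a left action ``unexpected'': there is no free symmetry here.) Your fallback direct argument is the right idea but is only a gesture; the paper's actual proof of (iii) has to establish two nontrivial points that your sketch omits. First, that $\gamma\alpha_k$ and $\gamma\beta_k$ have the same transition edges (using that the shared edges $E_i=E'_i$ force $\varphi(\gamma\alpha_{i-1})=\varphi(\gamma\beta_{i-1})$, and that loops carry no transition edges), and crucially that the endpoint of the last surviving shared transition edge has height at least $h(\mathrm{end}(E_k))$, via $\varphi(\alpha_k)\geqslant_{\JJ}\varphi(\gamma\alpha_k)$ and Lemma~\ref{stab} --- without this the value of $D$ could a priori drop. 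Second, to preserve the ``$+0$ versus $-1$'' refinement one must show that $e_{k+1}$ still labels a transition edge in any path representing $\gamma\alpha$ (and likewise $e'_{k+1}$ for $\gamma\beta$); the paper does this by a stability argument ($\leqslant_{\RR}\cap\,\JJ=\RR$) deriving a contradiction with $E_{k+1}$ being a transition edge. Neither of these steps is supplied by ``loops contain no transition edges'' alone, so as written the proposal does not establish (iii).
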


\begin{proof}
(i) It follows easily from Lemma \ref{trin}(i). 

(ii) We may assume that $\alpha\gamma \neq \beta\gamma$, otherwise $D(\alpha\gamma, \beta\gamma) = \ell$ is maximum. Hence $\alpha \neq \beta$ as well. We may also assume that $\xi(\alpha,\beta) > 0$, otherwise $D(\alpha,\beta) = 0$.

Let $(E_1,\ldots,E_m)$ and $(E'_1,\ldots,E'_n)$ be the sequences of transition edges corresponding to $\alpha$ and $\beta$. Let $k =  \xi(\alpha,\beta) > 0$, so that $E_1 = E'_1,\ldots, E_k = E'_k$. 
By the proof of Lemma \ref{trin}(ii), the sequences of transition edges corresponding to $\alpha\gamma$ and $\beta\gamma$ are of the form $(E_1,\ldots,E_m,F_1,\ldots,F_r)$ and $(E'_1,\ldots,E'_n,F'_1,\ldots,F'_s)$. Suppose that $\xi(\alpha\gamma,\beta\gamma) > \xi(\alpha,\beta) = k$. 
Then either $m = k < n$ and $F_1 = E'_{k+1}$, or $n = k < m$ and $E_{k+1} = F'_1$, or $m = n = k$ and $F_{1} = F'_1$.
In any case, we have $t(\eta(\alpha\gamma,\beta\gamma)) <_{{\mathcal{R}}} t(\eta(\alpha,\beta))$. Now
it follows from Lemma \ref{stab} that $h(t(\eta(\alpha\gamma,\beta\gamma))) > h(t(\eta(\alpha,\beta)))$. Thus $D(\alpha\gamma, \beta\gamma) > D(\alpha,\beta)$.

Therefore we may assume by Lemma \ref{trin}(ii) that $\xi(\alpha\gamma,\beta\gamma) = \xi(\alpha,\beta) = k$, which means that $D(\alpha\gamma, \beta\gamma)$ and $D(\alpha,\beta)$ differ by at most 1.
Hence we may also assume that $t(E_{k+1}) = t(E'_{k+1})$ (the case where we do not subtract 1). Since $E_1,\ldots,E_{k+1}$ and $E'_1,\ldots,E'_{k+1}$ are the first $k+1$ transition edges corresponding to $\alpha\gamma$ and $\beta\gamma$, respectively, we immediately get $D(\alpha\gamma, \beta\gamma) = D(\alpha,\beta)$.

(iii) Similarly to the proof of (ii), we may assume that $\gamma\alpha \neq \gamma\beta$ and $\xi(\alpha,\beta) > 0$.
By Lemma~\ref{leta}(ii), we have $t(\eta(\gamma\alpha,\gamma\beta)) \leqslant_{{\mathcal{J}}} t(\eta(\alpha,\beta))$.

Suppose first that $t(\eta(\gamma\alpha,\gamma\beta)) <_{{\mathcal{J}}} t(\eta(\alpha,\beta))$. Then Lemma \ref{stab} yields 
$h(t(\eta(\gamma\alpha,\gamma\beta))) > h(t(\eta(\alpha,\beta)))$
and so
$$D(\gamma\alpha, \gamma\beta) \geqslant 2h(t(\eta(\gamma\alpha,\gamma\beta)))-1 > 2h(t(\eta(\alpha,\beta)))
\geqslant D(\alpha,\beta).$$

Thus we may assume that $t(\eta(\gamma\alpha,\gamma\beta)) \, {\mathcal{J}}\, t(\eta(\alpha,\beta))$, so that 
$h(t(\eta(\gamma\alpha,\gamma\beta))) = h(t(\eta(\alpha,\beta)))$. We may also assume that $D(\alpha,\beta)$ is even, so 
that $k < m,n$ and $t(E_{k+1}) = t(E'_{k+1})$. 

Recall from the proof of Lemma \ref{leta} the words
$u = e_1u_1\ldots e_mu_m$ and $u' = e'_1u'_1\ldots e'_nu'_n$ representing $\alpha$ and $\beta$, and all the associated notation. By Lemma \ref{leta}(ii), we get 
$$t(\eta(\gamma\alpha,\gamma\beta)) \leqslant_{{\mathcal{R}}} \varphi(\gamma) t(\eta(\alpha,\beta)) = (wv_{k-1}e_{k})\sigma.$$
Thus
$$(wv_ke_{k+1})\sigma \leqslant_{{\mathcal{L}}}  (v_ke_{k+1})\sigma <_{{\mathcal{R}}} (v_{k-1}e_{k})\sigma = t(\eta(\alpha,\beta)) \, {\mathcal{J}}\, t(\eta(\gamma\alpha,\gamma\beta)) \leqslant_{{\mathcal{R}}} 
(wv_{k-1}e_{k})\sigma \, {\mathcal{R}}\, (wv_k)\sigma.$$
Since finite semigroups are stable, the relation $<_{{\mathcal{R}}}$ is contained in $<_{{\mathcal{J}}}$, hence 
$(wv_ke_{k+1})\sigma <_{{\mathcal{J}}} (wv_k)\sigma$. Since $(wv_ke_{k+1})\sigma \leqslant_{{\mathcal{R}}} (wv_{k})\sigma$, it follows that $(wv_ke_{k+1})\sigma <_{{\mathcal{R}}} (wv_k)\sigma$. Hence 
$$(wv_k)\sigma \xrightarrow{e_{k+1}} (wv_ke_{k+1})\sigma$$
is a transition edge, in fact the first transition edge of $\gamma\alpha$ which is not shared with $\gamma\beta$. Similarly,
$$(wv'_k)\sigma \xrightarrow{e'_{k+1}} (wv'_ke'_{k+1})\sigma$$
is the first transition edge of $\gamma\beta$ which is not shared with $\gamma\alpha$. Now
$$(v_ke_{k+1})\sigma = t(E_{k+1}) = t(E'_{k+1}) = (v'_ke'_{k+1})\sigma$$
yields 
$(wv_ke_{k+1})\sigma =  (wv'_ke'_{k+1})\sigma$ and so
$$D(\gamma\alpha, \gamma\beta) = 2h(t(\eta(\gamma\alpha,\gamma\beta))) = 2h(t(\eta(\alpha,\beta)))
= D(\alpha,\beta).$$
Therefore (iii) holds.

(iv) We may assume that $\alpha,\beta,\gamma$ are all different. Let $(E''_1,\ldots,E''_p)$ be the sequence of transition edges corresponding to $\gamma$.

If $\xi(\alpha,\beta) > \xi(\beta,\gamma)$, we can exchange $\alpha$ and $\gamma$ in view of (i), and so Lemma \ref{trin}(i) allows us to assume that $\xi(\alpha,\beta) \leqslant \xi(\beta,\gamma)$. Now Lemma \ref{trin}(iii) yields
$\xi(\alpha,\gamma) \geqslant  \xi(\alpha,\beta)$. Since $\xi(\alpha,\gamma) > \xi(\alpha,\beta)$ immediately implies our objective in view of (\ref{oop}), we are now restricted to the case
$$0 < k = \xi(\alpha,\gamma) = \xi(\alpha,\beta) \leqslant \xi(\beta,\gamma).$$
It follows that $E_i = E'_i = E''_i$ for $i = 1,\ldots,k$.

Unless $t(E_{k+1}) = t(E'_{k+1})$, we get $D(\alpha,\beta) = 2h(t(E_{k}))-1$ and we are done. Hence we may assume that $t(E_{k+1}) = t(E'_{k+1})$. Now, unless $t(E'_{k+1}) = t(E''_{k+1})$, we get $D(\beta,\gamma) = 2h(t(E_{k}))-1$ and we are done as well. But then $t(E_{k+1}) = t(E''_{k+1})$ and so $D(\alpha,\gamma) = 2h(t(E_{k})) = D(\alpha,\beta)$.
Therefore (iv) holds.
\end{proof}

\begin{remark}
In view of these properties, $D$ can indeed be called a length function for (unexpectedly) both a left and right action because of Lemma~\ref{lemma.length property} (ii) and (iii).
\end{remark}

\subsection{Representations as elliptic maps on a rooted tree}

Let $\Gamma = (V,E)$ be a simple undirected graph. Then $\Gamma$ is a tree if it is connected and admits no cycles (i.e. no closed paths of 
the form $v_1 \edge \cdots \edge v_n \edge v_1$ with $n \geq 3$ different vertices). 
If we distinguish a vertex $v_0 \in V$, we get the \defn{rooted tree} $(\Gamma,v_0)$.

Given a rooted tree $(\Gamma,v_0)$, we get a \defn{depth function} $\delta:V \to \mathbb{N}$ as follows: $\delta(v)$ 
is the edge length of the shortest path connecting $v$ to $v_0$. An \defn{endomorphism} of the rooted tree $(\Gamma,v_0)$ 
is a function $\varphi:V \to V$ such that:
\begin{itemize}
\item
$\delta(\varphi(v)) = \delta(v)$ for every $v \in V$;
\item
if $v \edge w$ is an edge of $\Gamma$, so is $\varphi(v) \edge \varphi(w)$.
\end{itemize}
Endomorphisms of rooted trees are also known as \defn{elliptic maps}. We denote by EM$(\Gamma,v_0)$ the 
monoid of all elliptic maps of $(\Gamma,v_0)$.

A \defn{representation} of a monoid $M$ as elliptic maps on a rooted tree $(\Gamma,v_0)$ is a monoid homomorphism 
$\theta:M \to {\rm EM}(\Gamma,v_0)$. The representation is \defn{faithful} if $\varphi$ is one-to-one.

\subsection{The Chiswell construction and the holonomy theorem}

We adapt next the Chiswell construction described in~\cite[Proof of Theorem 1.12]{Rhodes.1991} 
and~\cite[Proof of Theorem 4.7]{RS.2012} (see also~\cite{Chiswell.1976}). 

Let $T = \mathsf{KR}_{\operatorname{right}}(S,A)$ and let $D \colon T^{\mathbbm{1}} \times T^{\mathbbm{1}} \to \mathbb{N}$
be the Lyndon--Chiswell length function defined before (with 
maximum value $\ell$).
Write
$$C = \{(k,\alpha) \mid 0\leqslant k\leqslant \ell, \alpha \in T^{\mathbbm{1}}\}.$$
We define a relation~$\sim$ on $C$ by $(k,\alpha) \sim (k',\beta)$ if:
\begin{itemize}
\item
$k=k'$;
\item
$D(\alpha,\beta) \geqslant k$. 
\end{itemize}
It follows from Lemma \ref{lemma.length property}(i) and (iv) that $\sim$ is indeed an
equivalence relation on $C$. Note that $(0,\alpha) \sim (0,\beta)$ for all $\alpha,\beta \in T^{\mathbbm{1}}$.

Denote by $[k,\alpha]$ the equivalence class of $(k,\alpha) \in C$. Define an undirected graph $\mathcal{C}$ with vertices 
$[k,\alpha]$ and edges $[k,\alpha] \edge [k+1,\alpha]$ when $0\leqslant k < \ell$ and $\alpha \in T^{\mathbbm{1}}$.
Note that 
\begin{equation}
\label{tree1}
\mbox{if $[k,\beta] \edge [k+1,\alpha]$ is an edge of $\mathcal{C}$ then $[k,\beta] = [k,\alpha]$.}
\end{equation}
Indeed, if there exists such an edge then there exists some $\gamma \in T^{\mathbbm{1}}$ such that $[k,\beta] = [k,\gamma]$
and $[k+1,\alpha] = [k+1,\gamma]$. 
It follows that
$D(\alpha,\gamma) \geqslant k+1 > k$. Hence $[k,\alpha] = [k,\gamma]$ and (\ref{tree1}) holds.

With minimal adaptations from ~\cite{Rhodes.1991} and~\cite{RS.2012}, we prove the following lemma for the sake of 
completeness.

\begin{lemma}
\label{tree}
$(\mathcal{C},[0,\mathbbm{1}])$ is a rooted tree.
\end{lemma}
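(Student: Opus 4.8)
The plan is to show that the graph $\mathcal{C}$ is connected and acyclic, with $[0,\mathbbm{1}]$ as a distinguished root. Connectedness is the easier half: every vertex $[k,\alpha]$ is joined to $[0,\alpha]$ by the path $[k,\alpha]\edge[k-1,\alpha]\edge\cdots\edge[0,\alpha]$ using the defining edges of $\mathcal{C}$, and $[0,\alpha]=[0,\mathbbm{1}]$ for every $\alpha$ since $D(\alpha,\mathbbm{1})\geqslant 0$ always holds (indeed the first level is a single vertex). So every vertex is connected to the root, hence $\mathcal{C}$ is connected.

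The substantive part is acyclicity. First I would record the structural fact already isolated in~\eqref{tree1}: an edge of $\mathcal{C}$ can only go between consecutive levels $k$ and $k+1$, and if $[k,\beta]\edge[k+1,\alpha]$ then in fact $[k,\beta]=[k,\alpha]$; equivalently, each vertex at level $k+1$ has a \emph{unique} neighbour at level $k$ (its ``parent''), namely $[k,\alpha]$ for any representative $\alpha$. I would then argue that a vertex at level $k\geqslant 1$ has neighbours only at levels $k-1$ and $k+1$, and a unique one below it; combined with the fact that levels strictly increase or decrease along each edge, this makes $\mathcal{C}$ a disjoint union of trees rooted at level-$0$ vertices via the parent map. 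Since level $0$ consists of the single vertex $[0,\mathbbm{1}]$, there is exactly one such tree, and it is all of $\mathcal{C}$.

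To turn the ``parent map'' observation into a genuine proof of acyclicity, I would argue by contradiction: suppose $\mathcal{C}$ contains a cycle, i.e.\ a closed walk $v_0\edge v_1\edge\cdots\edge v_r=v_0$ with no immediate backtracking and $r\geqslant 3$. Let $v_j=[k_j,\alpha_j]$; consecutive $k_j$ differ by exactly $1$, so the sequence $(k_j)$ is a closed lattice path in $\mathbb{N}$ and must attain a strict local minimum at some interior index $j$, meaning $k_{j-1}=k_{j+1}=k_j+1$ and $v_{j-1}\neq v_{j+1}$. But $v_{j-1}$ and $v_{j+1}$ are both neighbours of $v_j$ one level above it, contradicting the uniqueness of the parent recorded in~\eqref{tree1} (the parent of $v_j$ would be $v_{j-1}=v_{j+1}$). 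Hence no cycle exists. I expect this local-minimum argument to be the only delicate point — the care needed is in handling the non-backtracking condition and in confirming that a level-$k$ vertex genuinely has all its neighbours at levels $k\pm1$, which follows because any edge of $\mathcal{C}$ is of the prescribed form $[i,\delta]\edge[i+1,\delta]$.

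Finally I would assemble these pieces: connectedness plus absence of cycles shows $\mathcal{C}$ is a tree, and designating $[0,\mathbbm{1}]$ as the root yields the rooted tree $(\mathcal{C},[0,\mathbbm{1}])$. As a byproduct, the depth function $\delta$ of this rooted tree satisfies $\delta([k,\alpha])=k$, since the unique reduced path from $[k,\alpha]$ to the root is $[k,\alpha]\edge[k-1,\alpha]\edge\cdots\edge[0,\mathbbm{1}]$; this identification of depth with the first coordinate is what will be needed when the Chiswell construction is used to produce the representation by elliptic maps in the Holonomy Theorem, so it is worth stating explicitly at the end of the proof.
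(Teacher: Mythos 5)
Your connectedness argument and your choice of \eqref{tree1} as the key structural fact are exactly those of the paper, but the acyclicity step fails as written at the decisive point. You locate a strict local \emph{minimum} of the level sequence, an index $j$ with $k_{j-1}=k_{j+1}=k_j+1$. At such an index the two cycle-neighbours $v_{j-1},v_{j+1}$ of $v_j$ lie at level $k_j+1$, i.e.\ one level \emph{farther} from the root, so they are children of $v_j$, not parents. Property \eqref{tree1} says nothing against a vertex having two distinct neighbours one level farther from the root --- the tree branches freely in that direction, as every figure in Section~\ref{section.examples} shows --- so there is no contradiction at a local minimum. Your parenthetical ``the parent of $v_j$ would be $v_{j-1}=v_{j+1}$'' misapplies \eqref{tree1}: that property bounds the number of neighbours of $v_{j\pm1}$ at level $k_j$, not the number of neighbours of $v_j$ at level $k_j+1$.

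The argument is rescued by taking a local (equivalently, a global) \emph{maximum} of the level sequence instead: at an index $j$ with $k_{j-1}=k_{j+1}=k_j-1$, both $v_{j-1}$ and $v_{j+1}$ are neighbours of $v_j$ at the level below it, so \eqref{tree1} forces $v_{j-1}=[k_j-1,\alpha_j]=v_{j+1}$, contradicting the non-backtracking (or vertex-distinctness) assumption on the cycle. This is precisely what the paper does: it normalizes the cycle so that $k_0\geqslant k_i$ for all $i$, deduces $k_1=k_{n-1}=k_0-1$, and concludes $[k_1,\alpha_1]=[k_0-1,\alpha_0]=[k_{n-1},\alpha_{n-1}]$. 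So your overall strategy is the right one and the repair is essentially a sign flip, but the step you yourself identify as ``the only delicate point'' is the one that does not go through as stated.
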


\begin{proof}
We have a path
$$[0,\mathbbm{1}] = [0,\alpha] \edge [1,\alpha] \edge \cdots \edge [k,\alpha]$$
for every vertex $[k,\alpha]$, hence $\mathcal{C}$ is connected.

Suppose that
$$[k_0,\alpha_0] \edge [k_1,\alpha_1] \edge \cdots \edge [k_n,\alpha_n] = [k_0,\alpha_0]$$
is a cycle in $\mathcal{C}$. We may assume that $k_0 \geqslant k_i$ for every $0 \leqslant i \leqslant n$. Then $k_1 = k_{n-1} = k_0-1$ 
and it follows from (\ref{tree1}) that $[k_1,\alpha_1] = [k_0-1,\alpha_0] = [k_{n-1},\alpha_{n-1}]$, a contradiction. Therefore $\mathcal{C}$ 
is a tree as required.
\end{proof}

This rooted tree is the \defn{Chiswell tree} induced by the Lyndon--Chiswell length function $D \colon T^{\mathbbm{1}} \times T^{\mathbbm{1}} \to \mathbb{N}$.
Note that the depth function is given by $\delta([k,\alpha]) = k$.

\begin{theorem}[\defn{Holonomy Theorem}]
\label{holt}
Let $(S,A)$ be a finite semigroup $S$ with generators $A$. Then
$\mathsf{KR}^{\mathbbm{1}}_{\operatorname{right}}(S,A)$ and $\mathsf{KR}^{\mathbbm{1}}_{\operatorname{left}}(S,A)$ are faithfully represented 
as elliptic maps on a finite rooted tree.
\end{theorem}

\begin{proof}
Once again, we adapt the proof from~\cite{Rhodes.1991,RS.2012}.

Let 
$$\begin{array}{rcl}
\epsilon:\mathsf{KR}_{\operatorname{right}}(S,A)&\to&{\rm EM}((\mathcal{C},[0,\mathbbm{1}]))\\
\alpha&\mapsto&\epsilon_{\alpha}
\end{array}$$
be defined by 
$$\epsilon_{\alpha}([k,\beta]) = [k,\alpha\beta].$$
First, we show that $\epsilon_{\alpha}$ is well defined. Suppose that $[k,\beta] = [k',\beta']$. Then $k = k'$ and $D(\beta,\beta') \geqslant k$. Then $D(\alpha\beta,\alpha\beta') \geqslant k$ by 
Lemma \ref{lemma.length property}(iii) and so $[k,\alpha\beta] = [k',\alpha\beta']$. Thus $\epsilon_{\alpha}$ is well defined.

It is obvious that $\delta(\epsilon_{\alpha}([k,\beta])) = k = \delta([k,\beta])$.  On the other hand, if $[k,\beta] \edge [k+1,\beta]$ 
is an edge of $\mathcal{C}$, so is $\epsilon_{\alpha}([k,\beta]) = [k,\alpha\beta] \edge [k+1,\alpha\beta] 
= \epsilon_{\alpha}([k+1,\beta])$. Therefore $\epsilon_{\alpha}$ is an elliptic map on the finite rooted tree 
$(\mathcal{C},[0,\mathbbm{1}])$ and so $\epsilon$ is well defined.

Given $\alpha,\alpha' \in \mathsf{KR}^{\mathbbm{1}}_{\operatorname{right}}(S,A)$, we have
$$\epsilon_{\alpha\alpha'}([k,\beta]) = [k,\alpha\alpha'\beta] = \epsilon_{\alpha}(\epsilon_{\alpha'}([k,\beta])),$$
hence $\epsilon_{\alpha\alpha'} = \epsilon_{\alpha}\epsilon_{\alpha'}$. On the other hand,
$\epsilon_{\mathbbm{1}}([k,\beta]) = [k,\beta]$ and so $\epsilon_{\mathbbm{1}}$ is the identity map. Thus $\epsilon$ 
is a monoid homomorphism.

Finally, assume that $\epsilon_{\alpha} = \epsilon_{\alpha'}$. Then in particular
$$[\ell,\alpha] = \epsilon_{\alpha}([\ell,\mathbbm{1}]) = \epsilon_{\alpha'}([\ell,\mathbbm{1}]) = [\ell,\alpha'],$$
hence $D(\alpha,\alpha') \geqslant \ell = \max({\rm im}(D))$. 

Suppose that $\alpha \neq \alpha'$. Let $(E_1,\ldots,E_m)$ be the sequence of transition edges corresponding to $\alpha$.
Since
$$D(\alpha,\alpha') = \ell = 2 \max\{ h(s) \mid s \in S^{\mathbbm{1}} \},$$
we are not subtracting 1, which implies that $\alpha$ possesses transition edges beyond $E_{\xi(\alpha,\alpha')}$, i.e. $\xi(\alpha,\alpha') < m$. In view of Lemma \ref{stab}, this contradicts the fact that $h$ should reach its maximum value at $t(E_{\xi(\alpha,\alpha')}))$.
Thus 
$\alpha = \alpha'$ and so $\epsilon$ is one-to-one. Therefore the representation is faithful.

Recall now that $\mathsf{KR}_{\operatorname{left}}(S,A) = \mathsf{KR}_{\operatorname{right}}(S^{{\rm op}},A).$ It follows 
from the first part that $\mathsf{KR}^{\mathbbm{1}}_{\operatorname{left}}(S,A)$ is also faithfully represented as elliptic maps 
on a finite rooted tree.
\end{proof}

\begin{remark}
It follows easily from Lemma \ref{lemma.length property}(ii) that we can consider a right action of $\mathsf{KR}_{\operatorname{right}}(S,A)$ 
on the Chiswell tree $(\mathcal{C},[0,\mathbbm{1}])$ given by
$$[k,\beta]\alpha = [k,\beta\alpha].$$
A straightforward adaptation of the proof of Theorem \ref{holt} shows that
we obtain an injective monoid homomorphism
$$\epsilon':\mathsf{KR}_{\operatorname{right}}(S,A) \to ({\rm EM}((\mathcal{C},[0,\mathbbm{1}])))^{{\rm op}},$$
since here the elliptic mappings must compose from left to right.
\end{remark}

\begin{remark}
Note that the Lyndon-Chiswell length function on $\mathsf{KR}^{\mathbbm{1}}_{\operatorname{right}}(S^{{\rm op}},A) \times 
\mathsf{KR}^{\mathbbm{1}}_{\operatorname{right}}(S^{{\rm op}},A)$ would be the version of the Lyndon-Chiswell length 
function built from $S$ when we replace its right Cayley graph by its left Cayley graph. And 
Lemma \ref{lemma.length property}(ii) ensures that left-right symmetry is preserved at all levels of the proofs, so we 
could replicate all the preceding proofs using $\mathsf{LCay}(S,A)$ and 
$\mathsf{KR}^{\mathbbm{1}}_{\operatorname{left}}(S,A)$.
\end{remark}

In the next paper we will expand this theory and apply it to mixing times.

\section{Examples}
\label{section.examples}

\subsection{Left action on semaphore codes}

Let $A$ be a finite alphabet and let $k \geqslant 1$. Consider the left action $A^+ \times A^k \to A^k$ defined as follows: 
given $v \in A^+$ and $u \in A^k$, let $v\cdot u$ denote the prefix of length $k$ of $vu$. An equivalence relation $\rho$ on $A^k$ is 
a \defn{left congruence} if
\[
	u\rho v \Rightarrow (w \cdot u)\rho(w \cdot v)
\]
holds for all $u,v \in A^k$ and $w \in A^+$. Then the set $\rho\backslash A^k$ of all $\rho$-classes becomes a left zero 
semigroup under the operation $(\rho u)(\rho v) = \rho u$. We
denote by $\operatorname{LC}(A^k)$ the set of left congruences on $A^k$.
As shown in~\cite{RSS.2016a,RSS.2016b}, every left congruence can be approximated by a special left congruence and
special left congruences are in bijection with semaphore codes. A \defn{semaphore code} \cite{BPR.2010} is a prefix code 
$\mathcal{S}$ over $A$ (i.e., all elements in the code are incomparable in prefix order) for which there is a left action in the following sense:
If $a\in A$ and $u \in \mathcal{S}$, then $au$ has a prefix in $\mathcal{S}$. The left action $a\cdot u$ is the prefix 
of $au$ that is in $\mathcal{S}$. 

\begin{remark}
In fact, \cite{RSS.2016a,RSS.2016b} use right congruences and semaphore codes are suffix codes. But as outlined in the introduction,
in applications the left action is usually used. See also~\cite{RhodesSchilling.2019}.
\end{remark}

Given a semaphore code $\mathcal{S}$ and the left action by $A^+$ on $\mathcal{S}$, consider the functions $\mathcal{S} \to \mathcal{S}$
induced by this action. This yields a semigroup $(S,A)$. The right Cayley graph of $(S,A)$ is equal to its Karnofsky--Rhodes expansion.

\begin{example}
\label{example.KRleft}
Let $A=\{a,b\}$ be a two letter alphabet and $I$ the ideal in $A^*$ generated by $aaa,aab,aba,baa,bab$. 
Then the left action prefix semaphore code in $A^4$ is given in Table~\ref{table.sc}. Hence $S$ has 11 elements.
\begin{table}
\[
\begin{array}{l|ll}
 & a\cdot & b\cdot \\ \hline
 aaa & aaa & baa\\
 aab & aaa & baa\\
 aba & aab & bab\\
 baa & aba & bbaa\\
 bab & aba & bbab\\
 bbaa & abba & bbba\\
 abba & aab & bab\\
 bbba & abbb & bbbb\\
 bbab & abba & bbba\\
 abbb & aab & bab\\
 bbbb & abbb & bbbb\\
 \hline
\end{array}
\]
\caption{The left action semaphore code in $\{a,b\}^4$ associated to the ideal generated by $aaa,aab,aba,baa,bab$.
\label{table.sc}}
\end{table}
To compute $\mathsf{KR}_{\operatorname{right}}(S,A)$, we compute the action of the various subwords of the elements
in $S$ on $\mathcal{S}$ and record the images, see Figure~\ref{figure.KRleft}. We have $\ell=8$, so the Chiswell tree has 9 levels. However,
the Lyndon--Chiswell length function $D$ cannot take on the values $2,4,6,8$. It follows that these levels are equal to their predecessors 
and can be omitted. The elliptic left action of $a$ on the Chiswell tree is given in Figure~\ref{figure.KRdota}, whereas the elliptic left
action of $b$ is given in Figure~\ref{figure.KRdotb}.
\begin{figure}[h!]
\scalebox{0.75}{
\begin{tikzpicture}[auto]
\node (I) at (0, 0) {$\mathbbm{1}$};
\node (A) at (-3,-1.5) {$\{aaa,aba,aab,abba,abbb\}$};
\node(B) at (3,-1.5) {$\{baa,bab,bbaa,bbba,bbab,bbbb\}$};
\node(C) at (-5,-3) {$\{aaa,aab\}$};
\node(D) at (-2,-3) {$\{aba,abba,abbb\}$};
\node(E) at (2,-3) {$\{baa,bab\}$};
\node(F) at (5,-3) {$\{bbaa,bbba,bbab,bbbb\}$};
\node(G) at (-5.7,-4.5) {$\{aaa\}$};
\node(H) at (-4.3,-4.5) {$\{aab\}$};
\node(J) at (-2.8,-4.5) {$\{aba\}$};
\node(K) at (-1.3,-4.5) {$\{abba,abbb\}$};
\node(L) at (1.3,-4.5) {$\{baa\}$};
\node(M) at (2.8,-4.5) {$\{bab\}$};
\node(N) at (4.3,-4.5) {$\{bbaa,bbab\}$};
\node(O) at (6.3,-4.5) {$\{bbba,bbbb\}$};
\node(P) at (-2,-6) {$\{abba\}$};
\node(Q) at (-0.3,-6) {$\{abbb\}$};
\node(R) at (3.7,-6) {$\{bbaa\}$};
\node(S) at (4.9,-6) {$\{bbab\}$};
\node(T) at (5.9,-6) {$\{bbba\}$};
\node(U) at (6.9,-6) {$\{bbbb\}$};

\draw[edge,blue,thick] (I) -- (A) node[midway, left] {$a$\;\;};
\draw[edge,blue,thick] (I) -- (B) node[midway, right] {\;\;$b$};
\draw[edge,blue,thick] (A) -- (C) node[midway, left] {$a$\;\;};
\draw[edge,blue,thick] (A) -- (D) node[midway, right] {\;\;$b$};
\draw[edge,blue,thick] (B) -- (E) node[midway, left] {$a$\;\;};
\draw[edge,blue,thick] (B) -- (F) node[midway, right] {\;\;$b$};
\draw[edge,blue,thick] (C) -- (G) node[midway, left] {$a$\;\;};
\draw[edge,blue,thick] (C) -- (H) node[midway, right] {\;\;$b$};
\draw[edge,blue,thick] (D) -- (J) node[midway, left] {$a$\;\;};
\draw[edge,blue,thick] (D) -- (K) node[midway, right] {\;\;$b$};
\draw[edge,blue,thick] (E) -- (L) node[midway, left] {$a$\;\;};
\draw[edge,blue,thick] (E) -- (M) node[midway, right] {\;\;$b$};
\draw[edge,blue,thick] (F) -- (N) node[midway, left] {$a$\;\;};
\draw[edge,blue,thick] (F) -- (O) node[midway, right] {\;\;$b$};
\draw[edge,blue,thick] (K) -- (P) node[midway, left] {$a$\;\;};
\draw[edge,blue,thick] (K) -- (Q) node[midway, right] {\;\;$b$};
\draw[edge,blue,thick] (N) -- (R) node[midway, left] {$a$\;\;};
\draw[edge,blue,thick] (N) -- (S) node[midway, right] {\;\;$b$};
\draw[edge,blue,thick] (O) -- (T) node[midway, left] {$a$\;\;};
\draw[edge,blue,thick] (O) -- (U) node[midway, right] {\;\;$b$};
\end{tikzpicture}}
\caption{The right Karnofsky--Rhodes expansion of  $\mathsf{RCay}(S,A)$ of Example~\ref{example.KRleft}, where the action on the leaves is omitted.
\label{figure.KRleft}}
\end{figure}

\begin{figure}[h!]
\scalebox{0.75}{
\begin{tikzpicture}[auto]
\node (I) at (0, 0) {$\mathbbm{1}$};
\node (A) at (-3,-1.5) {$\{aaa,aba,aab,abba,abbb\}$};
\node(B) at (3,-1.5) {$\{baa,bab,bbaa,bbba,bbab,bbbb\}$};
\node(C) at (-5,-3) {$\{aaa,aab\}$};
\node(D) at (-2,-3) {$\{aba,abba,abbb\}$};
\node(E) at (2,-3) {$\{baa,bab\}$};
\node(F) at (5,-3) {$\{bbaa,bbba,bbab,bbbb\}$};
\node(G) at (-5.7,-4.5) {$\{aaa\}$};
\node(H) at (-4.3,-4.5) {$\{aab\}$};
\node(J) at (-2.8,-4.5) {$\{aba\}$};
\node(K) at (-1.3,-4.5) {$\{abba,abbb\}$};
\node(L) at (1.3,-4.5) {$\{baa\}$};
\node(M) at (2.8,-4.5) {$\{bab\}$};
\node(N) at (4.3,-4.5) {$\{bbaa,bbab\}$};
\node(O) at (6.3,-4.5) {$\{bbba,bbbb\}$};
\node(P) at (-1.8,-6) {$\{abba\}$};
\node(Q) at (-0.3,-6) {$\{abbb\}$};
\node(R) at (3.8,-6) {$\{bbaa\}$};
\node(S) at (4.9,-6) {$\{bbab\}$};
\node(T) at (5.9,-6) {$\{bbba\}$};
\node(U) at (6.9,-6) {$\{bbbb\}$};
\node(V) at (-5.7,-6) {$\{aaa\}$};
\node(W) at (-4.3,-6) {$\{aab\}$};
\node(X) at (-2.8,-6) {$\{aba\}$};
\node(Y) at (1.3,-6) {$\{baa\}$};
\node(Z) at (2.8,-6) {$\{bab\}$};

\draw[edge,blue,thick] (I) -- (A) node[midway, left] {};
\draw[edge,blue,thick] (I) -- (B) node[midway, right] {};
\draw[edge,blue,thick] (A) -- (C) node[midway, left] {};
\draw[edge,blue,thick] (A) -- (D) node[midway, right] {};
\draw[edge,blue,thick] (B) -- (E) node[midway, left] {};
\draw[edge,blue,thick] (B) -- (F) node[midway, right] {};
\draw[edge,blue,thick] (C) -- (G) node[midway, left] {};
\draw[edge,blue,thick] (C) -- (H) node[midway, right] {};
\draw[edge,blue,thick] (D) -- (J) node[midway, left] {};
\draw[edge,blue,thick] (D) -- (K) node[midway, right] {};
\draw[edge,blue,thick] (E) -- (L) node[midway, left] {};
\draw[edge,blue,thick] (E) -- (M) node[midway, right] {};
\draw[edge,blue,thick] (F) -- (N) node[midway, left] {};
\draw[edge,blue,thick] (F) -- (O) node[midway, right] {};
\draw[edge,blue,thick] (K) -- (P) node[midway, left] {};
\draw[edge,blue,thick] (K) -- (Q) node[midway, right] {};
\draw[edge,blue,thick] (N) -- (R) node[midway, left] {};
\draw[edge,blue,thick] (N) -- (S) node[midway, right] {};
\draw[edge,blue,thick] (O) -- (T) node[midway, left] {};
\draw[edge,blue,thick] (O) -- (U) node[midway, right] {};

\draw[edge,blue,thick] (G) -- (V);
\draw[edge,blue,thick] (H) -- (W);
\draw[edge,blue,thick] (J) -- (X);
\draw[edge,blue,thick] (L) -- (Y);
\draw[edge,blue,thick] (M) -- (Z);

\path (B) edge[->,thick, red, bend right=15]  (A);
\path (A) edge[->,loop left, thick, red]  (A);

\path (D) edge[->,thick, red, bend right=15]  (C);
\path (E) edge[->,thick, red, bend right=15]  (D);
\path (F) edge[->,thick, red, bend right=15]  (D);
\path (C) edge[->,loop left, thick, red]  (C);

\path (H) edge[->,thick, red, bend right=30]  (G);
\path (J) edge[->,thick, red, bend right=15]  (H);
\path (K) edge[->,thick, red, bend right=15]  (H);
\path (L) edge[->,thick, red, bend right=15]  (J);
\path (M) edge[->,thick, red, bend right=15]  (J);
\path (N) edge[->,thick, red, bend right=15]  (K);
\path (O) edge[->,thick, red, bend right=15]  (K);
\path (G) edge[->,loop left, thick, red]  (G);

\path (U) edge[->,thick, red, bend right=20]  (Q);
\path (T) edge[->,thick, red, bend right=20]  (Q);
\path (S) edge[->,thick, red, bend right=20]  (P);
\path (R) edge[->,thick, red, bend right=20]  (P);
\path (Z) edge[->,thick, red, bend right=20]  (X);
\path (Y) edge[->,thick, red, bend right=20]  (X);
\path (Q) edge[->,thick, red, bend right=30]  (W);
\path (P) edge[->,thick, red, bend right=30]  (W);
\path (X) edge[->,thick, red, bend right=30]  (W);
\path (W) edge[->,thick, red, bend right=30]  (V);
\path (V) edge[->,loop left, thick, red]  (V);
\end{tikzpicture}}
\caption{The action of {\color{red}$a\cdot$} on the semaphore code induces the action level-by-level on the the Chiswell tree 
of Example~\ref{example.KRleft} (in red).
\label{figure.KRdota}}
\end{figure}

\begin{figure}[h!]
\scalebox{0.75}{
\begin{tikzpicture}[auto]
\node (I) at (0, 0) {$\mathbbm{1}$};
\node (A) at (-3,-1.5) {$\{aaa,aba,aab,abba,abbb\}$};
\node(B) at (3,-1.5) {$\{baa,bab,bbaa,bbba,bbab,bbbb\}$};
\node(C) at (-5,-3) {$\{aaa,aab\}$};
\node(D) at (-2,-3) {$\{aba,abba,abbb\}$};
\node(E) at (2,-3) {$\{baa,bab\}$};
\node(F) at (5,-3) {$\{bbaa,bbba,bbab,bbbb\}$};
\node(G) at (-5.7,-4.5) {$\{aaa\}$};
\node(H) at (-4.3,-4.5) {$\{aab\}$};
\node(J) at (-2.8,-4.5) {$\{aba\}$};
\node(K) at (-1.3,-4.5) {$\{abba,abbb\}$};
\node(L) at (1.3,-4.5) {$\{baa\}$};
\node(M) at (2.8,-4.5) {$\{bab\}$};
\node(N) at (4.3,-4.5) {$\{bbaa,bbab\}$};
\node(O) at (6.3,-4.5) {$\{bbba,bbbb\}$};
\node(P) at (-1.8,-6) {$\{abba\}$};
\node(Q) at (-0.3,-6) {$\{abbb\}$};
\node(R) at (3.8,-6) {$\{bbaa\}$};
\node(S) at (4.9,-6) {$\{bbab\}$};
\node(T) at (5.9,-6) {$\{bbba\}$};
\node(U) at (6.9,-6) {$\{bbbb\}$};
\node(V) at (-5.7,-6) {$\{aaa\}$};
\node(W) at (-4.3,-6) {$\{aab\}$};
\node(X) at (-2.8,-6) {$\{aba\}$};
\node(Y) at (1.3,-6) {$\{baa\}$};
\node(Z) at (2.8,-6) {$\{bab\}$};

\draw[edge,blue,thick] (I) -- (A) node[midway, left] {};
\draw[edge,blue,thick] (I) -- (B) node[midway, right] {};
\draw[edge,blue,thick] (A) -- (C) node[midway, left] {};
\draw[edge,blue,thick] (A) -- (D) node[midway, right] {};
\draw[edge,blue,thick] (B) -- (E) node[midway, left] {};
\draw[edge,blue,thick] (B) -- (F) node[midway, right] {};
\draw[edge,blue,thick] (C) -- (G) node[midway, left] {};
\draw[edge,blue,thick] (C) -- (H) node[midway, right] {};
\draw[edge,blue,thick] (D) -- (J) node[midway, left] {};
\draw[edge,blue,thick] (D) -- (K) node[midway, right] {};
\draw[edge,blue,thick] (E) -- (L) node[midway, left] {};
\draw[edge,blue,thick] (E) -- (M) node[midway, right] {};
\draw[edge,blue,thick] (F) -- (N) node[midway, left] {};
\draw[edge,blue,thick] (F) -- (O) node[midway, right] {};
\draw[edge,blue,thick] (K) -- (P) node[midway, left] {};
\draw[edge,blue,thick] (K) -- (Q) node[midway, right] {};
\draw[edge,blue,thick] (N) -- (R) node[midway, left] {};
\draw[edge,blue,thick] (N) -- (S) node[midway, right] {};
\draw[edge,blue,thick] (O) -- (T) node[midway, left] {};
\draw[edge,blue,thick] (O) -- (U) node[midway, right] {};
\draw[edge,blue,thick] (G) -- (V);
\draw[edge,blue,thick] (H) -- (W);
\draw[edge,blue,thick] (J) -- (X);
\draw[edge,blue,thick] (L) -- (Y);
\draw[edge,blue,thick] (M) -- (Z);

\path (A) edge[->,thick, darkgreen, bend left=15]  (B);
\path (B) edge[->,loop right, thick, darkgreen]  (B);

\path (C) edge[->,thick, darkgreen, bend left=15]  (E);
\path (D) edge[->,thick, darkgreen, bend left=15]  (E);
\path (E) edge[->,thick, darkgreen, bend left=15]  (F);
\path (F) edge[->,loop right, thick, darkgreen]  (F);

\path (G) edge[->,thick, darkgreen, bend left=20]  (L);
\path (H) edge[->,thick, darkgreen, bend left=20]  (L);
\path (J) edge[->,thick, darkgreen, bend left=15]  (M);
\path (K) edge[->,thick, darkgreen, bend left=15]  (M);
\path (L) edge[->,thick, darkgreen, bend left=30]  (N);
\path (M) edge[->,thick, darkgreen, bend left=30]  (N);
\path (N) edge[->,thick, darkgreen, bend left=30]  (O);
\path (O) edge[->,loop right, thick, darkgreen]  (O);

\path (V) edge[->,thick, darkgreen, bend left=20]  (Y);
\path (W) edge[->,thick, darkgreen, bend left=20]  (Y);
\path (X) edge[->,thick, darkgreen, bend left=20]  (Z);
\path (P) edge[->,thick, darkgreen, bend left=20]  (Z);
\path (Q) edge[->,thick, darkgreen, bend left=20]  (Z);
\path (Y) edge[->,thick, darkgreen, bend left=30]  (R);
\path (Z) edge[->,thick, darkgreen, bend left=60]  (R);
\path (R) edge[->,thick, darkgreen, bend left=40]  (T);

\path (S) edge[->,thick, darkgreen, bend left=60]  (T);
\path (T) edge[->,thick, darkgreen, bend left=60]  (U);
\path (U) edge[->,loop right, thick, darkgreen]  (U);
\end{tikzpicture}}
\caption{The action of {\color{darkgreen}$b\cdot$} on the semaphore code induces the action level-by-level 
on the the Chiswell tree of Example~\ref{example.KRleft} (in green).
\label{figure.KRdotb}}
\end{figure}
\end{example}

\subsection{Right zero semigroup with two generators}

Let $(S,A)$ be the right zero semigroup $\mathsf{RZ}(2)$ (that is $xy=y$ for all $x,y\in \mathsf{RZ}(2)$)
with two generators $A=\{a,b\}$. The Karnofsky--Rhodes expansion of the right Cayley graph of $(S,A)$
is depicted in Figure~\ref{figure.RZ}. Then the Chiswell construction is given in Figure~\ref{figure.Chiswell RZ}.
The right and left actions of $a$ and $b$ on the Chiswell construction are given in 
Figures~\ref{figure.Chiswell RZa},~\ref{figure.Chiswell RZb},~\ref{figure.Chiswell RZaL}~and~\ref{figure.Chiswell RZbL}, 
respectively.

\begin{figure}[h!]
\begin{center}
\begin{tikzpicture}
\node (A) at (0, 0) {$\mathbbm{1}$};
\node (B) at (-1.5,-1) {$a$};
\node(C) at (1.5,-1) {$b$};
\node (D) at (-1.5,-2.5) {$b$};
\node (E) at (1.5,-2.5) {$a$};

\draw[edge,blue,thick] (A) -- (B) node[midway, above] {$a$};
\draw[edge,thick,blue] (A) -- (C) node[midway, above] {$b$};
\path (B) edge[->,thick, bend right = 20] node[midway,left] {$b$} (D);
\path (D) edge[->,thick, bend right = 20] node[midway,right] {$a$} (B);
\path (B) edge[->,loop left, thick]  node[midway,left]{$a$} (B);
\path (D) edge[->,loop left, thick]  node[midway,left]{$b$} (D);
\path (C) edge[->,thick, bend right = 20] node[midway,left] {$a$} (E);
\path (E) edge[->,thick, bend right = 20] node[midway,right] {$b$} (C);
\path (C) edge[->,loop right, thick]  node[midway,right]{$b$} (C);
\path (E) edge[->,loop right, thick]  node[midway,right]{$a$} (D);

\end{tikzpicture}
\end{center}
\caption{$\mathsf{KR}_{\operatorname{right}}(\mathsf{RZ}(2),\{a,b\})$.
\label{figure.RZ}
} 
\end{figure}

\begin{figure}[h!]
\begin{center}
\begin{tikzpicture}
\node (A) at (0, 0) {$[0,\mathbbm{1}]$};
\node (B) at (-2, -1.5) {$[1,ab]=[1,a]$};
\node (C) at (2, -1.5) {$[1,b]=[1,ba]$};
\node (D) at (-3, -3) {$[2,a]$};
\node (E) at (-1, -3) {$[2,ab]$};
\node (F) at (1, -3) {$[2,b]$};
\node (G) at (3, -3) {$[2,ba]$};
\node (H) at (-6, -1.5) {$[1,\mathbbm{1}]$};
\node (I) at (-6, -3) {$[2,\mathbbm{1}]$};

\draw[edge,blue,thick] (A) -- (B) node[midway, above] {$a$};
\draw[edge,thick,blue] (A) -- (C) node[midway, above] {$b$};
\draw[edge,thick,blue] (B) -- (D);
\draw[edge,thick,blue] (B) -- (E);
\draw[edge,thick,blue] (C) -- (F);
\draw[edge,thick,blue] (C) -- (G);
\draw[edge,blue,thick] (A) -- (H);
\draw[edge,blue,thick] (H) -- (I);

\end{tikzpicture}
\end{center}
\caption{The Chiswell construction for $\mathsf{KR}_{\operatorname{right}}(\mathsf{RZ}(2),\{a,b\})$.
\label{figure.Chiswell RZ}
} 
\end{figure}

\newpage

\begin{figure}
\begin{center}
\scalebox{0.9}{
\begin{tikzpicture}
\node (A) at (0, 0) {$[0,\mathbbm{1}]$};
\node (B) at (-2, -1.5) {$[1,ab]=[1,a]$};
\node (C) at (2, -1.5) {$[1,b]=[1,ba]$};
\node (D) at (-3, -3) {$[2,a]$};
\node (E) at (-1, -3) {$[2,ab]$};
\node (F) at (1, -3) {$[2,b]$};
\node (G) at (3, -3) {$[2,ba]$};
\node (H) at (-6, -1.5) {$[1,\mathbbm{1}]$};
\node (I) at (-6, -3) {$[2,\mathbbm{1}]$};

\draw[edge,blue,thick] (A) -- (B) node[midway, above] {$a$};
\draw[edge,thick,blue] (A) -- (C) node[midway, above] {$b$};
\draw[edge,thick,blue] (B) -- (D);
\draw[edge,thick,blue] (B) -- (E);
\draw[edge,thick,blue] (C) -- (F);
\draw[edge,thick,blue] (C) -- (G);
\draw[edge,blue,thick] (A) -- (H);
\draw[edge,blue,thick] (H) -- (I);

\path (A) edge[->,loop left, thick, red]  (A);
\path (C) edge[->,thick, red, bend right=20]  (B);
\path (B) edge[->,loop left, thick, red]  (B);
\path (F) edge[->,thick, red, bend right=20]  (E);
\path (G) edge[->,thick, red, bend right=20]  (D);
\path (D) edge[->,loop left, thick, red]  (D);
\path (E) edge[->,loop left, thick, red]  (E);
\path (H) edge[->,thick, red, bend right=20]  (B);
\path (I) edge[->,thick, red, bend right=20]  (D);

\end{tikzpicture}}
\end{center}
\caption{The left action of {\color{red}$a\cdot $} on the Chiswell tree for 
$\mathsf{KR}_{\operatorname{right}}(\mathsf{RZ}(2),\{a,b\})$.
\label{figure.Chiswell RZa}
} 
\end{figure}

\begin{figure}
\begin{center}
\scalebox{0.9}{
\begin{tikzpicture}
\node (A) at (0, 0) {$[0,\mathbbm{1}]$};
\node (B) at (-2, -1.5) {$[1,ab]=[1,a]$};
\node (C) at (2, -1.5) {$[1,b]=[1,ba]$};
\node (D) at (-3, -3) {$[2,a]$};
\node (E) at (-1, -3) {$[2,ab]$};
\node (F) at (1, -3) {$[2,b]$};
\node (G) at (3, -3) {$[2,ba]$};
\node (H) at (-6, -1.5) {$[1,\mathbbm{1}]$};
\node (I) at (-6, -3) {$[2,\mathbbm{1}]$};

\draw[edge,blue,thick] (A) -- (B) node[midway, above] {$a$};
\draw[edge,thick,blue] (A) -- (C) node[midway, above] {$b$};
\draw[edge,thick,blue] (B) -- (D);
\draw[edge,thick,blue] (B) -- (E);
\draw[edge,thick,blue] (C) -- (F);
\draw[edge,thick,blue] (C) -- (G);
\draw[edge,blue,thick] (A) -- (H);
\draw[edge,blue,thick] (H) -- (I);

\path (A) edge[->,loop left, thick, darkgreen]  (A);
\path (B) edge[->,thick, darkgreen, bend left=20]  (C);
\path (C) edge[->,loop right, thick, darkgreen]  (C);
\path (E) edge[->,thick, darkgreen, bend left=20]  (F);
\path (D) edge[->,thick, darkgreen, bend left=20]  (G);
\path (F) edge[->,loop right, thick, darkgreen]  (F);
\path (G) edge[->,loop right, thick, darkgreen]  (G);
\path (H) edge[->,thick, darkgreen, bend right=20]  (C);
\path (I) edge[->,thick, darkgreen, bend right=20]  (F);

\end{tikzpicture}}
\end{center}
\caption{The left action of {\color{darkgreen}$b\cdot$} on the Chiswell tree for 
$\mathsf{KR}_{\operatorname{right}}(\mathsf{RZ}(2),\{a,b\})$.
\label{figure.Chiswell RZb}
} 
\end{figure}

\begin{figure}
\begin{center}
\scalebox{0.9}{
\begin{tikzpicture}
\node (A) at (0, 0) {$[0,\mathbbm{1}]$};
\node (B) at (-2, -1.5) {$[1,ab]=[1,a]$};
\node (C) at (2, -1.5) {$[1,b]=[1,ba]$};
\node (D) at (-3, -3) {$[2,a]$};
\node (E) at (-1, -3) {$[2,ab]$};
\node (F) at (1, -3) {$[2,b]$};
\node (G) at (3, -3) {$[2,ba]$};
\node (H) at (-6, -1.5) {$[1,\mathbbm{1}]$};
\node (I) at (-6, -3) {$[2,\mathbbm{1}]$};

\draw[edge,blue,thick] (A) -- (B) node[midway, above] {$a$};
\draw[edge,thick,blue] (A) -- (C) node[midway, above] {$b$};
\draw[edge,thick,blue] (B) -- (D);
\draw[edge,thick,blue] (B) -- (E);
\draw[edge,thick,blue] (C) -- (F);
\draw[edge,thick,blue] (C) -- (G);
\draw[edge,blue,thick] (A) -- (H);
\draw[edge,blue,thick] (H) -- (I);

\path (A) edge[->,loop left, thick, red]  (A);
\path (C) edge[->,loop right, thick, red]  (C);
\path (B) edge[->,loop left, thick, red]  (B);
\path (F) edge[->,thick, red, bend right=20]  (G);
\path (G) edge[->,loop right, thick, red]  (G);
\path (D) edge[->,loop left, thick, red]  (D);
\path (E) edge[->,thick, red, bend left=20]  (D);
\path (H) edge[->,thick, red, bend right=20]  (B);
\path (I) edge[->,thick, red, bend right=20]  (D);

\end{tikzpicture}}
\end{center}
\caption{The right action of {\color{red}$\cdot a$} on the Chiswell tree for 
$\mathsf{KR}_{\operatorname{right}}(\mathsf{RZ}(2),\{a,b\})$.
\label{figure.Chiswell RZaL}
} 
\end{figure}

\begin{figure}
\begin{center}
\scalebox{0.9}{
\begin{tikzpicture}
\node (A) at (0, 0) {$[0,\mathbbm{1}]$};
\node (B) at (-2, -1.5) {$[1,ab]=[1,a]$};
\node (C) at (2, -1.5) {$[1,b]=[1,ba]$};
\node (D) at (-3, -3) {$[2,a]$};
\node (E) at (-1, -3) {$[2,ab]$};
\node (F) at (1, -3) {$[2,b]$};
\node (G) at (3, -3) {$[2,ba]$};
\node (H) at (-6, -1.5) {$[1,\mathbbm{1}]$};
\node (I) at (-6, -3) {$[2,\mathbbm{1}]$};

\draw[edge,blue,thick] (A) -- (B) node[midway, above] {$a$};
\draw[edge,thick,blue] (A) -- (C) node[midway, above] {$b$};
\draw[edge,thick,blue] (B) -- (D);
\draw[edge,thick,blue] (B) -- (E);
\draw[edge,thick,blue] (C) -- (F);
\draw[edge,thick,blue] (C) -- (G);
\draw[edge,blue,thick] (A) -- (H);
\draw[edge,blue,thick] (H) -- (I);

\path (A) edge[->,loop left, thick, darkgreen]  (A);
\path (B) edge[->,loop left, thick, darkgreen]  (B);
\path (C) edge[->,loop right, thick, darkgreen]  (C);
\path (D) edge[->,thick, darkgreen, bend right=20]  (E);
\path (E) edge[->,loop right, thick, darkgreen]  (E);
\path (F) edge[->,loop right, thick, darkgreen]  (F);
\path (G) edge[->,thick, darkgreen, bend left=20]  (F);
\path (H) edge[->,thick, darkgreen, bend right=20]  (C);
\path (I) edge[->,thick, darkgreen, bend right=20]  (F);

\end{tikzpicture}}
\end{center}
\caption{The right action of {\color{darkgreen}$\cdot b$} on the Chiswell tree for 
$\mathsf{KR}_{\operatorname{right}}(\mathsf{RZ}(2),\{a,b\})$.
\label{figure.Chiswell RZbL}
} 
\end{figure}

\newpage

\subsection{The monoid $T_2$ of total transformations}

Let $T_2$ denote the monoid of total transformations on the set $\{ 1,2\}$. We denote $\varphi \in T_2$ by 
$(\varphi 1\; \varphi 2)$ (so in $\psi \varphi$ the map $\varphi$ acts first). Let $A = \{ a,b\}$ and let 
$\varphi \colon A^* \to S^{\mathbbm{1}}$ be the monoid homomorphism defined by $\varphi(a) = (2\; 1)$ and  
$\varphi(b) = (1\; 1)$. It is routine to check that $\varphi$ is onto and $\mathsf{RCay}(T_2,A) = \mathsf{LCay}(T_2^{\rm op},A)$ 
is depicted in Figure~\ref{figure.T21}. The Karnofsky--Rhodes expansion is given in Figure~\ref{figure.T22},
the Chiswell construction is drawn in Figure~\ref{figure.Chiswell T2}, and
the left action of $a$ on the Chiswell tree is depicted in Figure~\ref{figure.Chiswell T2a}.

\begin{figure}[h!]
\begin{center}
\begin{tikzpicture}
\node (A) at (0, 0) {$\mathbbm{1}$};
\node (B) at (-1.5,-1) {$(2\; 1)$};
\node(C) at (1.5,-1) {$(1\; 1)$};
\node (D) at (-1.5,-2.5) {$(2\; 2)$};
\node (E) at (1.5,-2.5) {$(1\; 2)$};

\draw[edge,blue,thick] (A) -- (B) node[midway, above] {$a$};
\draw[edge,thick,blue] (A) -- (C) node[midway, above] {$b$};
\path (B) edge[->,thick,blue] node[midway,left] {$b$} (D);
\path (D) edge[->,loop left, thick]  node[midway,left]{$a,b$} (D);
\path (B) edge[->,thick, bend right = 10] node[midway,left] {$a$} (E);
\path (C) edge[->,loop right, thick]  node[midway,right]{$a,b$} (C);
\path (E) edge[->,thick, blue] node[midway,right] {$b$} (C);
\path (E) edge[->,thick, bend right = 10] node[midway,right] {$a$} (B);

\end{tikzpicture}
\end{center}
\caption{$\mathsf{RCay}(T_2,A)$ with the transition edges in blue.
\label{figure.T21}
} 
\end{figure}

\begin{figure}[h!]
\begin{center}
\begin{tikzpicture}
\node (A) at (0, 0) {$\mathbbm{1}$};
\node (B) at (-1.5,-1) {$a$};
\node(C) at (1.5,-1) {$b$};
\node (D) at (-1.5,-2.5) {$ab$};
\node (E) at (1.5,-2.5) {$a^2$};
\node (F) at (1.5,-4) {$a^2b$};

\draw[edge,blue,thick] (A) -- (B) node[midway, above] {$a$};
\draw[edge,thick,blue] (A) -- (C) node[midway, above] {$b$};
\path (B) edge[->,thick,blue] node[midway,left] {$b$} (D);
\path (D) edge[->,loop left, thick]  node[midway,left]{$a,b$} (D);
\path (B) edge[->,thick, bend right = 10] node[midway,left] {$a$} (E);
\path (C) edge[->,loop right, thick]  node[midway,right]{$a,b$} (C);
\path (E) edge[->,thick, blue] node[midway,right] {$b$} (F);
\path (E) edge[->,thick, bend right = 10] node[midway,right] {$a$} (B);
\path (F) edge[->,loop right, thick]  node[midway,right]{$a,b$} (F);

\end{tikzpicture}
\end{center}
\caption{$\mathsf{KR}_{\operatorname{right}}(T_2,A)$.
\label{figure.T22}
} 
\end{figure}

\begin{figure}[t]
\begin{center}
\begin{tikzpicture}
\node (A) at (0, 0) {$[0,\mathbbm{1}]$};
\node (B) at (-2,-1.5) {$[1,\mathbbm{1}]$};
\node (C) at (0,-1.5) {$[1,a]$};
\node (D) at (2,-1.5) {$[1,b]$};
\node (E) at (-2,-3) {$[2,\mathbbm{1}]$};
\node (F) at (0,-3) {$[2,a]$};
\node (G) at (2,-3) {$[2,b]$};
\node (H) at (-2,-4.5) {$[3,\mathbbm{1}]$};
\node (I) at (0,-4.5) {$[3,a]$};
\node (J) at (4,-4.5) {$[3,b]$};
\node (K) at (-4,-6) {$[4,\mathbbm{1}]$};
\node (L) at (-2,-6) {$[4,a]$};
\node (M) at (0,-6) {$[4,a^2]$};
\node (N) at (2,-6) {$[4,ab]$};
\node (O) at (4,-6) {$[4,a^2b]$};
\node (P) at (6,-6) {$[4,b]$};
\draw[edge,blue,thick] (A) -- (B);
\draw[edge,blue,thick] (A) -- (C) node[midway, right] {$a$};
\draw[edge,thick,blue] (A) -- (D) node[midway, above] {$b$};
\draw[edge,thick,blue] (B) -- (E);
\draw[edge,thick,blue] (D) -- (G);
\draw[edge,thick,blue] (C) -- (F);
\draw[edge,thick,blue] (E) -- (H);
\draw[edge,blue,thick] (F) -- (I);
\draw[edge,blue,thick] (G) -- (J);
\draw[edge,thick,blue] (H) -- (K);
\draw[edge,thick,blue] (I) -- (L);
\draw[edge,thick,blue] (I) -- (M);
\draw[edge,thick,blue] (I) -- (N);
\draw[edge,blue,thick] (I) -- (O);
\draw[edge,blue,thick] (J) -- (P);

\end{tikzpicture}
\end{center}
\caption{The Chiswell construction for $\mathsf{KR}_{\operatorname{right}}(T_2,A)$.
\label{figure.Chiswell T2}
} 
\end{figure}

\begin{figure}[t]
\begin{center}
\begin{tikzpicture}
\node (A) at (0, 0) {$[0,\mathbbm{1}]$};
\node (B) at (-2,-1.5) {$[1,\mathbbm{1}]$};
\node (C) at (0,-1.5) {$[1,a]$};
\node (D) at (2,-1.5) {$[1,b]$};
\node (E) at (-2,-3) {$[2,\mathbbm{1}]$};
\node (F) at (0,-3) {$[2,a]$};
\node (G) at (2,-3) {$[2,b]$};
\node (H) at (-2,-4.5) {$[3,\mathbbm{1}]$};
\node (I) at (0,-4.5) {$[3,a]$};
\node (J) at (4,-4.5) {$[3,b]$};
\node (K) at (-4,-6) {$[4,\mathbbm{1}]$};
\node (L) at (-2,-6) {$[4,a]$};
\node (M) at (0,-6) {$[4,a^2]$};
\node (N) at (2,-6) {$[4,ab]$};
\node (O) at (4,-6) {$[4,a^2b]$};
\node (P) at (6,-6) {$[4,b]$};
\draw[edge,blue,thick] (A) -- (B);
\draw[edge,blue,thick] (A) -- (C) node[midway, right] {$a$};
\draw[edge,thick,blue] (A) -- (D) node[midway, above] {$b$};
\draw[edge,thick,blue] (B) -- (E);
\draw[edge,thick,blue] (D) -- (G);
\draw[edge,thick,blue] (C) -- (F);
\draw[edge,thick,blue] (E) -- (H);
\draw[edge,blue,thick] (F) -- (I);
\draw[edge,blue,thick] (G) -- (J);
\draw[edge,thick,blue] (H) -- (K);
\draw[edge,thick,blue] (I) -- (L);
\draw[edge,thick,blue] (I) -- (M);
\draw[edge,thick,blue] (I) -- (N);
\draw[edge,blue,thick] (I) -- (O);
\draw[edge,blue,thick] (J) -- (P);

\path (A) edge[->,loop left, thick, red]  (A);
\path (B) edge[->,thick, red, bend right=25]  (C);
\path (C) edge[->,loop left, thick, red]  (C);
\path (D) edge[->,thick, red, bend right=20]  (C);
\path (E) edge[->,thick, red, bend right=25]  (F);
\path (F) edge[->,loop left, thick, red]  (F);
\path (G) edge[->,thick, red, bend right=20]  (F);
\path (H) edge[->,thick, red, bend right=25]  (I);
\path (I) edge[->,loop left, thick, red]  (I);
\path (J) edge[->,thick, red, bend right=20]  (I);
\path (K) edge[->,thick, red, bend right=20]  (L);
\path (L) edge[->,thick, red, bend right=20]  (M);
\path (M) edge[->,thick, red, bend left=-20]  (L);
\path (N) edge[->,thick, red, bend left=-20]  (O);
\path (O) edge[->,thick, red, bend right=20]  (N);
\path (P) edge[->,thick, red, bend left=25]  (N);

\end{tikzpicture}
\end{center}
\caption{The (left) action of {\color{red}$a\cdot$} on the Chiswell tree for $\mathsf{KR}_{\operatorname{right}}(T_2,A)$.
\label{figure.Chiswell T2a}
} 
\end{figure}

\bibliographystyle{alpha}
\bibliography{holonomy}{}

\end{document}